\documentclass{article}
\usepackage[margin=1in]{geometry}% http://ctan.org/pkg/geometry

\AtEndDocument{\bigskip{\footnotesize%
  \textsc{Department of Applied Mathematics, Delhi Technological University, Delhi–110042, India} \par
  \textit{E-mail address:} \texttt{spkumar@dtu.ac.in} \par
  \addvspace{\medskipamount}
  \textsc{Department of Applied Mathematics, Delhi Technological University, Delhi–110042, India} \par  
  \textit{E-mail address:} \texttt{suryagiri456@gmail.com} \par
  %\addvspace{\medskipamount}
  %(J.~Jones) \textsc{Department of Philosophy, Freedman College, Periwinkle, Colorado 84320} \par
  %\textit{E-mail address}, J.~Jones: \texttt{id739e@@oseoi44 (Bitnet)}
}}
\usepackage{graphicx}
\usepackage{hyperref}
\usepackage[caption = false]{subfig}
\usepackage{amsthm}
\usepackage{amssymb}
\usepackage{mathrsfs}
\usepackage{mathtools}
\usepackage{enumerate}
\usepackage{amssymb}
\usepackage{wrapfig}
\usepackage{float}
\allowdisplaybreaks

\usepackage{thmtools}
\declaretheorem[numbered=no,
name=Theorem A]{theorem A}

\declaretheorem[numbered=no,
name=Theorem B]{theorem B}

\declaretheorem[numbered=no,
name=Theorem C]{theorem C}

\declaretheorem[numbered=no,
name=Theorem D]{theorem D}

\declaretheorem[numbered=no,
name=Theorem E]{theorem E}

\usepackage{geometry}
\geometry{letterpaper}
\numberwithin{equation}{section}
\DeclareMathOperator{\RE}{Re}

\theoremstyle{plain}
\usepackage{lipsum}

\newtheorem{theorem}{Theorem}[section]
\newtheorem{corollary}[theorem]{Corollary}

\theoremstyle{definition}
\newtheorem{definition}[theorem]{Definition}
\theoremstyle{remark}
\newtheorem{remark}{Remark}[section]

\makeatother

\setlength{\parskip}{3pt}
\usepackage{authblk}
\begin{document}
\title{Toeplitz Determinants for a Class of Holomorphic Mappings in Higher Dimensions}
\author{Surya Giri and S. Sivaprasad Kumar}
%\address{Department of Applied Mathematics, Delhi Technological University, Delhi, SC 29208}
%\email{}
%\author{Surya Giri Mahant}
%\address{}
%\email{surya}
%\author[$\dagger$]{Surya Giri}
%\author[$\star$]{S. Sivaprasad Kumar}
%\author[$\ddag$]{J. Jones}

%%\affil[$\dagger$,$\star$]{Department of Applied Mathematics, Delhi Technological University, Delhi–110042, India}\\
%\vspace{0.5cm}
%\vspace{0.5cm}

  %\textit{E-mail address}, R.~Campbell: \texttt{campr@galois.psu.edu}
%\affil[$\star$]{Atmospheric Research Station,
%Pala Lundi, Fiji}
%\affil[$\ddag$]{Department of Philosophy, Freedman College,
%Periwinkle, Colorado 84320}
\date{}

%\title{Bohr-Rogosinski phenomenon for $\mathcal{S}^*(\psi)$ and $\mathcal{C}(\psi)$}
%	\thanks{K. Gangania thanks to University Grant Commission, New-Delhi, India for providing Junior Research Fellowship under }	

%	\author[Kamaljeet]{Kamaljeet Gangania}
%	\address{Department of Applied Mathematics, Delhi Technological University,
%%	\email{gangania.m1991@gmail.com}
	
%	\author[S. Sivaprasad Kumar]{S. Sivaprasad Kumar}
%	\address{Department of Applied Mathematics, Delhi Technological University,
%		Delhi--110042, India}
%	\email{spkumar@dce.ac.in}

\maketitle	
	
\begin{abstract}
   In this paper, we establish the sharp bounds of certain Toeplitz determinants formed over the coefficients of mappings from a class defined on the unit ball of complex Banach space and on the unit polydisc in $\mathbb{C}^n$. Derived bounds provide certain new results for the subclasses of normalized univalent functions and extend some known results in higher dimensions. 
\end{abstract}

\vspace{0.5cm}
	\noindent \textit{Keywords:} Quasi-convex mappings; Toeplitz determinants; Coefficient inequalities.\\
	\\
	\noindent \textit{AMS Subject Classification:} 32H02, 30C45.
\maketitle

\section{Introduction}
      Let $\mathcal{S}$ be the class of analytic univalent functions in the unit disk $\mathbb{U} = \{ z \in \mathbb{C}: \vert z \vert <1 \}$  having the form $g(z) = z +\sum_{n=2}^\infty b_n z^n$ and $\mathcal{K}(\alpha) \subset \mathcal{S}$ denote the class of convex functions of order $\alpha$, $0 \leq \alpha <1.$ A function $g \in \mathcal{K}(\alpha)$ if and only if
      $$ \RE \bigg(1 + \frac{z g''(z)}{g'(z)} \bigg) > \alpha, \quad z\in \mathbb{U}.$$
  For $\alpha =0$, the class $\mathcal{K}(\alpha)$ reduces to the class of convex functions $ \mathcal{K} :=\mathcal{K}(0)$. A function $g \in \mathcal{S}$ is said to be starlike of order $\alpha$ if and only if
  $$ \RE \bigg(\frac{z g'(z)}{g(z)}\bigg) > \alpha, \quad z\in \mathbb{U}.$$
  The class of all starlike functions of order $\alpha$ is denoted by $\mathcal{S}^*(\alpha)$ and let $ \mathcal{S}^* :=\mathcal{S}^*(0)$.
    Recently, Ali et al. \cite{Ali} obtained the bounds of certain Toeplitz determinants whose entries are the Taylor series coefficients of functions in $\mathcal{S}$ and some of its subclasses. For $g(z) = z+ \sum_{n=2}^\infty b_n z^n$, the Toeplitz matrix is given by
\begin{equation*}
     T_{m,n}(g)= \begin{bmatrix}
	b_n & b_{n+1} & \cdots & b_{n+m-1} \\
	b_{n+1} & b_n & \cdots & b_{n+m-2}\\
	\vdots & \vdots & \vdots & \vdots\\
    b_{n+m-1} & b_{n+m-2} & \cdots & b_n\\
	\end{bmatrix}.
\end{equation*}
   In particular, the second order Toeplitz determinant is
\begin{equation}\label{intilb}
    \det{T_{2,2}(g)}=  b_2^2  -  b_3^2
\end{equation}
   and the third order Toeplitz determinant is given by
\begin{equation}\label{T31}
    \det T_{3,1}(g)=
     \begin{vmatrix}
     1 & b_{2} & b_{3} \\
	 b_{2} & 1 & b_{2}\\
	 b_3 & b_{2} & 1\\
     \end{vmatrix}
      = 2 b_2^{2} b_3 - 2 b_2^2-  b_3^2+1.
\end{equation}
    Toeplitz matrices and Toeplitz determinants have various applications in pure as well as in applied mathematics. They occur in a variety of fields including partial differential equations, image processing and differential geometry. For more details and applications, we refer \cite{LHLIM}.

    %For various subclasses of $\mathcal{S}$, the bounds of certain Toeplitz and hermitian Toeplitz determinants are studied.
     Numerous articles have recently focused on finding sharp estimates for the Toeplitz and Hermitian-Toeplitz determinants for various classes, but in one dimensional complex plane. Ahuja et al. \cite{Ahuja} established the sharp bounds of $\vert \det{T_{2,2}(g)}\vert$ and $\vert \det T_{3,1}(g)\vert$ for the class $\mathcal{K}$ and its subclasses. For more work in this direction, we refer \cite{Lecko,B.Kowa,SGIRI} and the references cited therein. For the class $\mathcal{K}$ and $\mathcal{K}(\alpha)$, the following bounds are proved in \cite{Ahuja}.
   %%%%%%%%%%%%%%%%%%%%%%%%%%%%%%%%%%%%%%%%%%%%%%%%%%%%%%%%%%%%%%%%%%%%%%%%%%%%%%%%%%%%%%%%%%%%%%%%%%%%%%%%%%%%%%%%%%%%%%%%%%%%%%%%%%%%%%%%%%%%%%%%%%%%%%%%%%%%%%%%%%%%%%
             %%%%%%%%%%%%%%%%%%%%%%%%%%%%%%%%%%%%%%%%%%%%%%%%%%%%%%%%%%%%%%%%%%%%%%%%%%%%%%%%%%%%%%%%%%%%%%%%%%%%%%%%%%%%%%%%%%%%%%%%%%%%%%%%%%%%%%%%%%%%%%%%%%%%%%%%%%%%%%%%%%%%%%
\begin{theorem A}\cite{Ahuja}
    If $f \in \mathcal{K}$, then  $ \vert \det T_{2,2}(f)\vert \leq  2$. The bound is sharp.
\end{theorem A}
\begin{theorem B}\cite{Ahuja}
     If $f \in \mathcal{K}$, then $ \vert \det T_{3,1}(f)\vert \leq 4.$ The bound is sharp.

\end{theorem B}
\begin{theorem C}\cite{Ahuja}
     If $f\in \mathcal{K}(\alpha)$, then the following sharp inequality hold:
       $$ \vert \det T_{2,2}(f)\vert \leq  \frac{2 (1-\alpha)^2 (2 \alpha^2 - 6 \alpha +9)}{9}. $$
\end{theorem C}
\begin{theorem D}\cite{Ahuja}\label{thmD}
    If $f\in \mathcal{K}(\alpha)$ and  $\alpha \in [0,1/2]$, then the following sharp inequality hold:
     $$ \vert \det T_{3,1}(f)\vert \leq  \frac{8 \alpha^4 - 34 \alpha^3 + 71 \alpha^2 - 72 \alpha +36}{9}. $$
\end{theorem D}
    In this paper, we generalize the above results in higher dimensions for a class of holomorphic mappings defined on the unit ball in complex Banach space and on the unit polydisc in $\mathbb{C}^n$. Let $X$ be a complex Banach space with respect to norm $\|\cdot \|$ and  $\mathbb{B} = \{ z \in X: \| z\|<1\}$ be the unit ball. When $X= \mathbb{C}$, $\mathbb{B}$ is denoted by $\mathbb{U}$. Let $ \mathbb{C}^n$ denote the space of $n-$complex variables $z = (z_1, z_2, \cdots, z_n)'$ and $\mathbb{U}^n$ be the Euclidean unit ball in $\mathbb{C}^n$. The boundary and distinguished boundary of $\mathbb{U}^n$ are denoted by $\partial \mathbb{U}^n$ and $\partial_0\mathbb{U}^n$, respectively.

    Let $L(X,Y)$ denote the set of all continuous linear operators from $X$ into a complex Banach space $Y$. For each $z \in X \setminus\{0 \}$, let
     $$ T_z = \{ l_z \in L(X,\mathbb{C}) : l_z(z) = \| z\|, \| l_z \| = 1\}.$$
     By the Hahn-Banach theorem, this set is non-empty.

      By $\mathcal{H}(\Omega,\Omega')$, we denote the set of holomorphic mappings from a domain $\Omega \subseteq X$ into a domain $\Omega' \subseteq Y$ and let $\mathcal{H}(\Omega)=\mathcal{H}(\Omega,X).$ If $g \in \mathcal{H}(\mathbb{B})$ and $z \in \mathbb{B}$, then for each $k = 1,2,\cdots,$ there is a bounded symmetric $k-$linear mapping
    $$D^k g(z) : \prod_{j=1}^k X \rightarrow X ,$$
    called the $k^{th}$ order Fr\'{e}chet derivative of $g$ at $z$ such that
    $$ g(w) = \sum_{k=0}^\infty \frac{1}{k!} D^k g(z) ((w -z)^k )$$
    for all $w$ in some neighborhood of $z$.

    A mapping $g\in \mathcal{H}(\Omega)$ is said to be biholomorphic if $g(\Omega)$ is a domain in $X$ and the inverse $g^{-1}$ exists and is holomorphic on $g(\Omega)$. If the Fr\'{e}chet derivative $D g(z)$ has a bounded inverse for each $z\in \Omega,$ then $g \in \mathcal{H}(\Omega)$ is called locally biholomorphic mapping on $\Omega.$ Analog of the class $\mathcal{S}$, let $\mathcal{S}(\mathbb{B})$ denote the class of biholomorphic mappings $g$ from $\mathbb{B}$ into $X$, satisfying $g(0)=0 $ and $Dg(0)=I$, where $I$ represents the linear identity operator from $X$ into $X$. It is easily seen that $\mathcal{S}(\mathbb{B})$ is not a normal family when the dimension is greater than one \cite{Cart}. A mapping $g \in \mathcal{S}(\mathbb{B})$ is said to be starlike if $g(\mathbb{B})$ is starlike with respect to the origin. Also, a mapping $g \in \mathcal{S}(\mathbb{B})$ is said to be convex if $g(\mathbb{B})$ is convex.

   On a bounded circular domain $\Omega \subset \mathbb{C}^n$, the first and the $m^{th}$ Fr\'{e}chet derivative of a holomorphic mapping $g : \Omega \rightarrow X$  are written by
    $ D g(z)$ and $D^m g(z) (a^{m-1},\cdot)$ respectively. The matrix representations are
\begin{align*}
    D g(z) &= \bigg(\frac{\partial g_j}{\partial z_k} \bigg)_{1 \leq j, k \leq n}, \\
    D^m g(z)(a^{m-1}, \cdot) &= \bigg( \sum_{p_1,p_2, \cdots, p_{m-1}=1}^n  \frac{ \partial^m g_j (z)}{\partial z_k \partial z_{p_1} \cdots \partial z_{p_{m-1}}} a_{p_1} \cdots a_{p_{m-1}}   \bigg)_{1 \leq j,k \leq n},
\end{align*}
   where $g(z) = (g_1(z), g_2(z), \cdots g_n(z))'$ and $ a= (a_1, a_2, \cdots a_n)'\in \mathbb{C}^n.$
   \\

    Liu and Liu \cite{Liu} defined the following class:
\begin{definition}\cite{Liu}\label{defn1C}
    Suppose $\alpha \in [0,1)$ and $g: \mathbb{B} \rightarrow X$ is a normalized locally biholomorphic mapping. If
    $$ \RE \left\{l_z [(D g(z))^{-1} ( D^2 g(z) (z^2) + D g(z) (z) )]  \right\} \geq \alpha \| z \|, \quad  l_z \in T_z, \;z \in \mathbb{B}\setminus \{0 \},$$
    then $f$ is called a quasi convex mapping of type $B$ and order $\alpha$ on $\mathbb{B}$.

    If $\mathbb{B} = \mathbb{U}^n$ and $X = \mathbb{C}^n$, then the above condition reduces to
     $$ \left\vert \frac{q_k(z)}{z_k} - \frac{1}{2 \alpha} \right\vert < \frac{1}{2\alpha}, \quad \forall z \in \mathbb{U}^n \setminus \{0 \},$$
   where $$q(z) = ( q_1(z), q_2(z), \cdots , q_n(z))' = (D g(z))^{-1} ( D^2 g(z) (z^2) + D g(z) (z) )$$
   is a column vector in $\mathbb{C}^n$ and $k$ satisfies $$\vert z_k \vert = \| z \| = \max_{1 \leq j \leq n} \{ \vert z_j \vert \}.$$
   For $\mathbb{B} = \mathbb{U}$ and $X = \mathbb{C}$,  the relation is equivalent to
   $$ \RE \bigg( 1 + \frac{ z g''(z)}{g'(z)} \bigg) > \alpha, \quad z \in \mathbb{U}. $$
   Let $\mathcal{K}_\alpha(\mathbb{B})$ denote the class of quasi convex mappings of type $B$ and order $\alpha.$
\end{definition}
    When $\alpha =0$, Definition \ref{defn1C} is the definition of quasi convex mapping of type $B$  introduced by Roper and Suffridge \cite{Rope}.
\begin{definition}
     Let $\Phi: \mathbb{U} \rightarrow \mathbb{C}$ be a biholomorphic function such that $\RE \Phi(z) >0$, $\Phi(0)=1$, $\Phi'(0)>0$ and $\Phi''(0) \in \mathbb{R}$. Let $\mathcal{M}_\Phi$ be the class of mappings given by
     $$ \mathcal{M}_\Phi = \bigg\{ p \in \mathcal{H}(\mathbb{B}) : p(0)= Dp(0)=I, \; \frac{l_z (p(z))}{\|z\|} \in \Phi(\mathbb{U}),\; z\in \mathbb{B}\setminus\{0\}, l_z \in T_z \bigg\}.  $$
     In case of $\mathbb{B} = \mathbb{U}^n$ and $X = \mathbb{C}^n$, we have
     $$ \mathcal{M}_\Phi = \bigg\{ p \in \mathcal{H}(\mathbb{B}) : p(0)= Dp(0)=I, \; \frac{ p_j (z)}{z_j} \in \Phi(\mathbb{U}), \; z\in \mathbb{U}^n\setminus\{0\} \bigg\},  $$
     where $p(z) = (p_1(z), p_2(z), \cdots, p_n(z))'$ is a column vector in $\mathbb{C}^n$ and $j$ satisfies $\vert z_j \vert = \| z\| = \max_{1 \leq k \leq n}\{ \vert z_k \vert \}$.
\end{definition}
    In 1999, Roper and Suffridge \cite{Rope} gave a sufficient condition for a normalized biholomorphic convex mapping on the Euclidean unit ball in $\mathbb{C}^n$. Later, Zhu \cite{Zhu} provided a brief proof of this theorem. Xu et al. \cite{XuC} obtained the sharp bounds of Fekete-Szeg\"{o} inequality for the class of quasi-convex mappings of type $B$ and order $\alpha$ defined on the unit ball $\mathbb{B}$ and on the unit polydisc in $\mathbb{C}^n$.  Liu and Liu \cite{LiuLiu2} derived the sharp estimates of all homogenous expansions for a subclass of holomorphic mappings of  quasi-convex mappings of type $B$ and order $\alpha$ in higher dimensions.
     Contrary to the coefficient inequalities for many subclasses of $\mathcal{S}$, only few are known for homogeneous expansions for subclasses of biholomorphic mappings in the case of several complex variables \cite{Grah2,Ham,Ham2,Kohr,Grah,Xuetal}.

     In case of one complex variable, many coefficient problems are studied for the class $\mathcal{K}$ such as Theorems A-D. A natural question arises that how to retain these results in higher dimensions. Providing an answer to this question is the aim of this study.

    %%%%%%%%%%%%%%%%%%%%%%%%%%%%%%%%%%%%%%%%%%%%%%%%%%%%%%%%%%%%%%%%%%%%%%%%%   %%%%%%%%%%%%%%%%%%%%%%%%%%%%%%%%%%%%%%%%%%%%%%%%%%%%%%%%%%%%%%%%%%%%%%%%%   %%%%%%%%%%%%%%%%%%%%%%%%%%%%%%%%%%%%%%%%%%%%%%%%%%%%%%%%%%%%%%%%%%%%%%%%%   %%%%%%%%%%%%%%%%%%%%%%%%%%%%%%%%%%%%%%%%%%%%%%%%%%%%%%%%%%%%%%%%%%%%%%%%%
\section{Main Results}
      %%%%%%%%%%%%%%%%%%%%%%%%%%%%%%%%%%%%%%%%%%%%%%%%%%%%%%%%%%%%%%%%%%%%%%%%%   %%%%%%%%%%%%%%%%%%%%%%%%%%%%%%%%%%%%%%%%%%%%%%%%%%%%%%%%%%%%%%%%%%%%%%%%%
      %%%%%%%%%%%%%%%%%%%%%%%%%%%%%%%%%%%%%%%%%%%%%%%%%%%%%%%%%%%%%%%%%%%%%%%%%   %%%%%%%%%%%%%%%%%%%%%%%%%%%%%%%%%%%%%%%%%%%%%%%%%%%%%%%%%%%%%%%%%%%%%%%%%
   The main results of the paper are stated and proved in this section.
\begin{theorem}\label{thm1C}
   Let  $g \in \mathcal{H}(\mathbb{B}, \mathbb{C})$ with  $g(0)=1$, $g(z) \neq 0$, $z\in \mathbb{B}$ and suppose that $G(z) =  z g(z) $. If $(D G(z))^{-1} ( D^2 G(z) (z^2) + D G(z) (z) )\in \mathcal{M}_\Phi$ such that $\Phi$ satisfies
   $$\lvert \Phi''(0) + 2 (\Phi'(0))^2 \rvert \geq 2  \Phi'(0) > 0, $$
   then
    $$ \bigg\vert \bigg( \frac{ l_z (D^2 G(0) (z^2))}{2! \vert\vert z \vert\vert^2} \bigg)^2 - \bigg(\frac{ l_z (D^3 G(0) (z^3))}{3! \vert\vert z \vert\vert^3} \bigg)^2 \bigg\vert \leq \frac{( \Phi'(0) )^2}{4} + \frac{(\Phi'(0))^2}{36}\bigg( \frac{1}{2} \frac{\Phi''(0)}{\Phi'(0)} +  \Phi'(0) \bigg)^2 .$$
   The bound is sharp.
\end{theorem}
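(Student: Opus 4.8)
The plan is to reduce the displayed estimate to a one–variable coefficient problem for a Ma--Minda convex slice of $G$, and then to carry out an optimization that is controlled precisely by the hypothesis on $\Phi$. This mirrors the structure of the scalar determinant $b_2^2-b_3^2$ in \eqref{intilb}, the two bracketed quantities playing the roles of $b_2$ and $b_3$.

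First I would identify the two bracketed quantities with Taylor coefficients of a scalar function. Writing $G(z)=z+\tfrac12 D^2G(0)(z^2)+\tfrac16 D^3G(0)(z^3)+\cdots$ and using $G(z)=zg(z)$ with $g(0)=1$, direct differentiation gives $D^2G(0)(z^2)=2\,(Dg(0)(z))\,z$ and $D^3G(0)(z^3)=3\,(D^2g(0)(z^2))\,z$, so that (since $l_z(z)=\|z\|$)
$$ \frac{l_z(D^2G(0)(z^2))}{2!\,\|z\|^2}=\frac{Dg(0)(z)}{\|z\|}=Dg(0)(x_0),\qquad \frac{l_z(D^3G(0)(z^3))}{3!\,\|z\|^3}=\frac{D^2g(0)(z^2)}{2\|z\|^2}=\tfrac12 D^2g(0)(x_0^2), $$
where $x_0=z/\|z\|$. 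Fixing $z\neq0$ and $l_{x_0}:=l_z\in T_{x_0}$, these are exactly the coefficients $P_2,P_3$ of the holomorphic slice $v(\zeta)=l_{x_0}(G(\zeta x_0))=\zeta+P_2\zeta^2+P_3\zeta^3+\cdots$ on $\mathbb U$, so the left side of the claim is $|P_2^2-P_3^2|$.

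Next I would convert the membership of $p(z)=(DG(z))^{-1}(D^2G(z)(z^2)+DG(z)(z))$ in $\mathcal M_\Phi$ into a scalar subordination. Expanding $p$ to third order by a Neumann series and using $D^kG(z)(z^k)$ on homogeneous parts yields $p(z)=z+2A_2(z)+\left(6A_3(z)-2\,DA_2(z)(A_2(z))\right)+\cdots$, with $A_2(z)=(Dg(0)(z))z$ and $A_3(z)=\tfrac12(D^2g(0)(z^2))z$; the product rule gives $DA_2(z)(A_2(z))=2(Dg(0)(z))^2 z$, collapsing the degree-three part to a scalar multiple of $z$. Passing to $h(\zeta)=l_{x_0}(p(\zeta x_0))/\zeta$, the defining inclusion forces $h(\zeta)\in\Phi(\mathbb U)$ for every $\zeta\in\mathbb U$: writing $\zeta=|\zeta|e^{i\theta}$ and applying the condition at the point $z=|\zeta|(e^{i\theta}x_0)$ with support functional $e^{-i\theta}l_{x_0}\in T_{e^{i\theta}x_0}$ reproduces exactly the value $h(\zeta)$. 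Since $h(0)=1=\Phi(0)$ and $\Phi$ is univalent, $h\prec\Phi$, i.e. $h=\Phi\circ\omega$ for a Schwarz function $\omega(\zeta)=w_1\zeta+w_2\zeta^2+\cdots$ with $|w_1|\le1$ and $|w_2|\le1-|w_1|^2$. Matching $h(\zeta)=1+2Dg(0)(x_0)\zeta+(3D^2g(0)(x_0^2)-4(Dg(0)(x_0))^2)\zeta^2+\cdots$ against $\Phi(\omega(\zeta))=1+B_1w_1\zeta+(B_1w_2+B_2w_1^2)\zeta^2+\cdots$, where $B_1=\Phi'(0)$ and $B_2=\tfrac12\Phi''(0)$, gives $P_2=\tfrac12 B_1 w_1$ and $P_3=\tfrac16\bigl(B_1w_2+(B_2+B_1^2)w_1^2\bigr)$.

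Finally I would estimate $|P_2^2-P_3^2|$. Here $B_1>0$ and $B_2$ are real by the hypotheses on $\Phi$, and the target bound equals $\tfrac{B_1^2}{4}+\tfrac{(B_2+B_1^2)^2}{36}$. By the triangle inequality and $|w_2|\le1-|w_1|^2$, with $x=|w_1|\in[0,1]$,
$$ |P_2^2-P_3^2|\le |P_2|^2+|P_3|^2\le \frac{B_1^2x^2}{4}+\frac{\bigl(B_1(1-x^2)+|B_2+B_1^2|\,x^2\bigr)^2}{36}=:F(x^2). $$
The assumption $|\Phi''(0)+2(\Phi'(0))^2|\ge 2\Phi'(0)$ is exactly $|B_2+B_1^2|\ge B_1$, which makes $F$ nondecreasing on $[0,1]$ (its derivative is a sum of nonnegative terms), so $F$ is maximal at $x=1$, where $w_2=0$ and $F(1)=\tfrac{B_1^2}{4}+\tfrac{(B_2+B_1^2)^2}{36}$. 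Equality runs back through the chain when $|w_1|=1$ with the phase of $w_1$ chosen so that $P_2^2$ and $P_3^2$ are anti-aligned; specializing to $X=\mathbb C$ (so $\mathbb B=\mathbb U$) with $\omega(\zeta)=i\zeta$ produces an extremal $G$, which proves sharpness. The main obstacle is the middle step: turning the several-variables, functional-dependent inclusion in $\mathcal M_\Phi$ into a genuine one-variable subordination, which requires both the rotation device to neutralize the direction-dependence of $l_z$ and the careful third-order expansion of the non-scalar inverse $(DG(z))^{-1}$ producing the cross term $DA_2(z)(A_2(z))$; once the problem is scalarized, the remaining optimization is routine and the hypothesis on $\Phi$ enters only to guarantee monotonicity of $F$.
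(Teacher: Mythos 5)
Your proposal is correct, and while it shares the paper's overall skeleton (slice along $z_0=z/\|z\|$, scalarize the $\mathcal{M}_\Phi$ condition via the rotation device, bound the two coefficients, conclude by the triangle inequality, and use the rotation $i\zeta$ for sharpness), the technical core is genuinely different. Where you expand $(DG(z))^{-1}$ by a Neumann series and compute the homogeneous parts $2A_2(z)$ and $6A_3(z)-2DA_2(z)(A_2(z))$ of $p(z)$, the paper instead exploits $G(z)=zg(z)$ to obtain the exact identity $(DG(z))^{-1}(D^2G(z)(z^2)+DG(z)(z))=\frac{D^2g(z)(z^2)+3Dg(z)(z)+g(z)}{g(z)+Dg(z)(z)}\,z$, which scalarizes the slice in closed form; the two computations agree (your degree-three term checks out: $DA_2(z)(A_2(z))=2(Dg(0)(z))^2z$, so $6P_3-4P_2^2=B_1w_2+B_2w_1^2$ as you state). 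More substantially, the paper never introduces the Schwarz function: it bounds $|a_2|$ from $|h'(0)|\le\Phi'(0)$ and then quotes the Fekete--Szeg\H{o} inequality of Xu et al., inequality (\ref{FSBC}), with $\lambda=0$ to bound $|a_3|$, after which $|a_2^2-a_3^2|\le|a_2|^2+|a_3|^2$ finishes with the two separately maximized bounds; you instead re-derive both coefficient estimates from the subordination $h\prec\Phi$ via $|w_2|\le1-|w_1|^2$ and run a coupled optimization in $x=|w_1|$, showing your $F$ is nondecreasing precisely under the hypothesis $|B_2+B_1^2|\ge B_1$. Your route is more self-contained and explains why the paper's cruder separate-maxima step loses nothing (both coefficient bounds peak simultaneously at $|w_1|=1$), at the cost of redoing work the paper outsources to \cite{XuC}. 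One caveat: your sharpness argument only exhibits an extremal for $X=\mathbb{C}$, whereas the theorem asserts sharpness over an arbitrary complex Banach space; the paper's extremal $DG(z)=I\exp\int_0^{T_u(z)}\frac{\Phi(it)-1}{t}\,dt$ is exactly the lift of your $\omega(\zeta)=i\zeta$ example along a support functional $T_u$ at a unit vector $u$, and you would need to record this lift (in effect, replacing your slice variable $\zeta$ by $l_u(z)$) to match the stated generality.
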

\begin{proof}
    Fix $z \in X\setminus \{0\}$ and let $h : \mathbb{U} \rightarrow \mathbb{C}$ be defined by
\begin{equation*}
    h(\zeta) =
\left\{
\begin{aligned}
\begin{array}{ll}
     \dfrac{l_z ( (D G ( \zeta z_0) )^{-1} ( D^2 G(\zeta z_0) ( (\zeta z_0)^2 )  + D G( \zeta z_0) \zeta z_0 ) )}{\zeta }, & \zeta \neq 0, \\
     1, & \zeta =0,
\end{array}
\end{aligned}
\right.
\end{equation*}
    where $z_0 = \frac{z}{\| z\|}$. Then $h \in \mathcal{H}(\mathbb{U})$ and $h(0) = \Phi(0) =1$.  Since $(D G(z))^{-1} ( D^2 G(z) (z^2) + D G(z) (z) )\in \mathcal{M}_\Phi$, therefore, we have
\begin{align*}
    h(\zeta) &= \frac{l_z ( (D G ( \zeta z_0) )^{-1} ( D^2 G(\zeta z_0) ( (\zeta z_0)^2 )  + D G( \zeta z_0) \zeta z_0 ) )}{\zeta } \\
             &= \frac{l_{z{_0}} ( (D G ( \zeta z_0) )^{-1} ( D^2 G(\zeta z_0) ( (\zeta z_0)^2 )  + D G( \zeta z_0) \zeta z_0 ) )}{\zeta } \\
             &=  \frac{l_{\zeta z{_0}} ( (D G ( \zeta z_0) )^{-1} ( D^2 G(\zeta z_0) ( (\zeta z_0)^2 )  + D G( \zeta z_0) \zeta z_0 ) )}{\| \zeta z_0 \| } \in \Phi(\mathbb{U}), \;\; \zeta \in \mathbb{U}.
\end{align*}
    Applying a similar method used in \cite[Theorem 7.1.14]{GraKoh}, we get
\begin{equation}\label{11}
     (D G(z))^{-1} =   \frac{1}{g(z)} \bigg( I - \frac{ \frac{z D g(z)}{g(z)}}{1 + \frac{D g(z) z}{g(z)}} \bigg).
\end{equation}
    A simple computation using the fact $G(z) = z g(z)$ yields
\begin{equation}\label{111}
     D^2 G(z) (z^2) + D G(z) (z) = ( D^2 g(z) ( z^2 ) +3 D g(z)(z) +g(z) )z.
\end{equation}
    By using (\ref{11}) and (\ref{111}), it follows
\begin{equation}\label{useC}
   (D G(z) )^{-1} ( D^2 G(z) (z^2) + D G(z) (z) ) = \frac{ D^2 g(z) (z^2) + 3 D g(z)(z) + g(z) }{ g(z)  + D g(z) (z)} z.
\end{equation}
    Consequently
\begin{equation}\label{eqn1}
     {l_z ( (D G(z) )^{-1} ( D^2 G(z) (z^2) + D G(z) (z) ) )} =  \frac{ D^2 g(z) (z^2) + 3 D g(z)(z) + g(z) }{ g(z)  + D g(z) (z)}\| z \|.
\end{equation}
    Using (\ref{eqn1}), we obtain
\begin{align*}
     h(\zeta) &= \frac{l_{\zeta z{_0}} ( (D G ( \zeta z_0) )^{-1} ( D^2 G(\zeta z_0) ( (\zeta z_0)^2 )  + D G( \zeta z_0) \zeta z_0 ) )}{\| \zeta z_0 \| } \\
              &=  \frac{ D^2 g(\zeta z_0) ((\zeta z_0)^2 ) + 3 D g(\zeta z_0)(\zeta z_0) + g(\zeta z_0) }{ g(\zeta z_0)  + D g(\zeta z_0) ( \zeta z_0)} .
\end{align*}
     Equivalently, we can write
     $$ h(\zeta) ( g(\zeta z_0)  + D g(\zeta z_0) ( \zeta z_0) ) =  D^2 g(\zeta z_0) ((\zeta z_0)^2 ) + 3 D g(\zeta z_0)(\zeta z_0) + g(\zeta z_0). $$
     The series expansion in terms of $\zeta$ gives
\begin{align*}
   \bigg(1 +  h'(0) \zeta  + \frac{h''(0)}{2} \zeta^2 + \cdots \bigg)  \bigg( 1 & +2 Dg(0)(z_0) \zeta + \frac{3  Dg(0)(z_{0}^2)}{2} \zeta^2 + \cdots \bigg)  \\
        & = 1 +4  Dg(0)(z_0) \zeta + \frac{ 9 Dg(0)(z_{0}^2)}{2} \zeta^2 + \cdots .
\end{align*}
     Comparison of the homogenous expansions of either sides of the above equality provide $ h'(0) = 2 D g(0)(z_0).$
    That is
\begin{equation}\label{eqn2}
     h'(0) \|z \| =  2 D g(0)(z) .
\end{equation}
    Also, we have
    $$ \frac{D^2 G(0) (z^2)}{2!} = D g(0)(z) z,  $$
    which gives
    $$  \frac{ l_z ( D^2 G(0) (z^2)) }{2!} = D g(0)(z) \| z\|. $$
   Now, using $\vert h'(0) \vert \leq \Phi'(0) $ with (\ref{eqn2}), we obtain
\begin{equation}\label{a2BC}
    \bigg\vert \frac{ l_z ( D^2 G(0) (z^2)) }{2! \| z\|^2} \bigg\vert \leq   \frac{\Phi'(0)}{2}.
\end{equation}
    Moreover, for $\lambda \in \mathbb{C}$, Xu et al. \cite[Theorem 3.1]{XuC} proved that
\begin{equation}\label{FSBC}
\begin{aligned}
\left.
\begin{array}{ll}
   \bigg\vert  &\dfrac{ l_z (D^3 G(0) (z^3))}{3! \vert\vert z \vert\vert^3} -  \lambda \bigg(\dfrac{ l_z (D^2 G(0) (z^2))}{2! \vert\vert z \vert\vert^2} \bigg)^2 \bigg\vert \\
   &\leq \dfrac{\vert \Phi'(0) \vert}{6} \max \left\{ 1,  \left\lvert \dfrac{1}{2} \dfrac{\Phi''(0)}{\Phi'(0)} + \bigg(1 - \dfrac{3}{2} \lambda \bigg)  \Phi'(0) \right\rvert \right\} , \;\;  z \in \mathbb{B}\setminus \{0 \}.
\end{array}
\right\}
\end{aligned}
\end{equation}
    Since $ \lvert \Phi''(0) + 2 (\Phi'(0))^2 \rvert \geq 2  \Phi'(0)$, therefore the above inequality gives
\begin{equation}\label{a3BC}
     \bigg\vert  \dfrac{ l_z (D^3 G(0) (z^3))}{3! \vert\vert z \vert\vert^3} \bigg\vert \leq \dfrac{ \Phi'(0)}{6}   \left( \dfrac{1}{2} \dfrac{\Phi''(0)}{\Phi'(0)} +  \Phi'(0) \right).
\end{equation}
     Also, note that
\begin{align*}
     \bigg\vert \bigg( \dfrac{ l_z (D^3 G(0) (z^3))}{3! \vert\vert z \vert\vert^3} \bigg)^2 &-    \bigg(   \frac{ l_z ( D^2 G(0) (z^2))  }{2! \| z\|^2}  \bigg)^2 \bigg\vert \\
      & \leq  \bigg\vert  \dfrac{ l_z (D^3 G(0) (z^3))}{3! \vert\vert z \vert\vert^3} \bigg\vert^2  +   \bigg\vert \frac{ l_z ( D^2 G(0) (z^2)) }{2! \| z\|^2} \bigg\vert^2.
\end{align*}
    The required bound follows from the above inequality together with the bounds given in (\ref{a2BC}) and (\ref{a3BC}).

    The result is sharp for the function $G$ given by
\begin{equation}\label{extB}
     D G(z) = I \exp \int_{0}^{T_{u}(z)} \frac{\Phi(i t) - 1}{t} dt, \quad z \in \mathbb{B} ,\;\; \| u\| =1.
\end{equation}
  Clearly, $ (D G (z))^{-1} ( D^2 G(z) (z^2) + D G(z) (z) ) \in \mathcal{M}_\Phi$ and
    $$\frac{D^3  G(0) (z^3)}{3!} = - \frac{1}{6} \left( \frac{\Phi''(0)}{2} + (\Phi'(0))^2 \right) (l_u (z))^2 z   \;\; \text{and} \;\;  \frac{D^2  G (0) (z^2)}{2!}= \frac{i \Phi'(0)}{2} l_u(z) z, $$
     which immediately gives
\begin{align*}
      \frac{l_z ( D^2  G(0) (z^2))  }{2!} &= \frac{i \Phi'(0)}{2} l_u(z) \| z \|
\end{align*}
    and
   $$     \frac{l_z (D^3  G(0) (z^3)) \| z \| }{3!} = - \frac{1}{6} \left( \frac{\Phi''(0)}{2} + (\Phi'(0))^2 \right) (l_u (z))^2 \| z\|^2. $$
    Taking $z = r u$ $(0 < r <1)$, we get
\begin{equation}\label{cftbC}
   \frac{l_z (D^3  G (0) ( z^3) ) }{3! \| z \|^3}  = - \frac{1}{6} \left( \frac{\Phi''(0)}{2} + (\Phi'(0))^2 \right) \;\; \text{and} \;\; \frac{l_z (D^2  G(0) ( z^2) )}{2! \|z \|^2} = \frac{i \Phi'(0)}{2}.
\end{equation}
    According to the above equations, we have
\begin{align*}
   \bigg\vert \bigg( \frac{l_z (D^3  G (0) ( z^3) ) }{3! \| z \|^3} \bigg)^2 - \bigg(\frac{l_z (D^2  G (0) ( z^2) )}{2! \|z \|^2} \bigg)^2   \bigg\vert =
   \frac{(\Phi'(0))^2}{4} + \frac{1}{36} \bigg( \frac{\Phi''(0)}{2} + (\Phi'(0))^2  \bigg)^2,
\end{align*}
   which establishes the sharpness of the bound and completes the proof.
\end{proof}

   %%%%%%%%%%%%%%%%%%%%%%%%%%%%%%%%%%%%%%%%%%%%%%%%%%%%%%%%%%%%%%%%%%%%%%%%%%%%%%%%%%%%%%%%%%%%%%%%%%%%%%%%%%%%%%%%%%%%%%%%%%%%%%%%%%%%%%%%%%%%%%%%%%%%%%%%%%%%%%%%%%%%%%%%%%%%%%%%%%%
    %%%%%%%%%%%%%%%%%%%%%%%%%%%%%%%%%%%%%%%%%%%%%%%%%%%%%%%%%%%%%%%%%%%%%%%%%%%%%%%%%%%%%%%%%%%%%%%%%%%%%%%%%%%%%%%%%%%%%%%%%%%%%%%%%%%%%%%%%%%%%%%%%%%%%%%%%%%%%%%%%%%%%%%%%%%%%%%%$
\begin{theorem}\label{thm2C}
   Let  $g \in \mathcal{H}(\mathbb{B}, \mathbb{C})$ with $g(0)=1$, $g(z) \neq 0$, $z\in \mathbb{B}$ and suppose that $G(z) =  z g(z)$. If $D G(z))^{-1} ( D^2 G(z) (z^2) + D G(z) (z)  \in \mathcal{M}_\Phi$ such that $\Phi$ satisfy
   $$ 2 \Phi'(0) - 2 (\Phi'(0))^2  \leq \Phi''(0) \leq 4 (\Phi'(0))^2 - 2 \Phi'(0),$$
   then
   $$ \vert 2 a_2^2 a_3  - a_3^2 - 2 a_2^2 + 1 \vert \leq 1 + 2 ( \Phi'(0))^2 + \frac{ ( \Phi'(0) )^2}{4} \bigg( \frac{\Phi''(0)}{2 \Phi'(0)} -3 \Phi'(0) \bigg)  \bigg( \frac{\Phi''(0)}{2 \Phi'(0)} + \Phi'(0) \bigg), $$
   where
\begin{align*}
   a_3 = \frac{ l_z (D^3 G(0) (z^3))}{3! \vert\vert z \vert\vert^3} \;\; \text{and} \;\;  a_2 &= \frac{ l_z (D^2 G(0) (z^2))}{2! \vert\vert z \vert\vert^2}.
\end{align*}
   The bound is sharp.
\end{theorem}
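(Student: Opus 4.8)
The plan is to factor the Toeplitz expression through the identity
$$2a_2^2 a_3 - a_3^2 - 2a_2^2 + 1 = (a_2^2 - 1)^2 - (a_3 - a_2^2)^2 = (a_3 - 1)(2a_2^2 - a_3 - 1),$$
which displays it as a product of two factors, each governed by a Fekete--Szeg\"{o} functional of the form $a_3 - \lambda a_2^2$. Since (\ref{FSBC}) estimates $a_3 - \lambda a_2^2$ for every $\lambda$, the strategy is to bound the two factors using the two values of $\lambda$ adapted to them and then multiply.

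For the first factor I would use $|a_3 - 1| \le 1 + |a_3|$ and estimate $|a_3|$ by (\ref{FSBC}) with $\lambda = 0$, which gives $|a_3| \le \frac{\Phi'(0)}{6}\big(\frac{1}{2}\frac{\Phi''(0)}{\Phi'(0)} + \Phi'(0)\big)$, exactly the bound already recorded in (\ref{a3BC}); this branch of the maximum is the operative one precisely because the left-hand hypothesis $2\Phi'(0) - 2(\Phi'(0))^2 \le \Phi''(0)$ is equivalent to $\frac{1}{2}\frac{\Phi''(0)}{\Phi'(0)} + \Phi'(0) \ge 1$. For the second factor I would write $|2a_2^2 - a_3 - 1| \le 1 + |a_3 - 2a_2^2|$ and apply (\ref{FSBC}) with $\lambda = 2$, obtaining $|a_3 - 2a_2^2| \le \frac{\Phi'(0)}{6}\big(2\Phi'(0) - \frac{1}{2}\frac{\Phi''(0)}{\Phi'(0)}\big)$; here the right-hand hypothesis $\Phi''(0) \le 4(\Phi'(0))^2 - 2\Phi'(0)$ is exactly $2\Phi'(0) - \frac{1}{2}\frac{\Phi''(0)}{\Phi'(0)} \ge 1$, which again places the maximum on its nontrivial branch. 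Multiplying the two estimates and simplifying produces the asserted inequality.

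For sharpness I would return to the extremal mapping $G$ of (\ref{extB}), whose coefficients are computed in (\ref{cftbC}) as $a_2 = i\Phi'(0)/2$ and $a_3 = -\frac{\Phi'(0)}{6}\big(\frac{1}{2}\frac{\Phi''(0)}{\Phi'(0)} + \Phi'(0)\big)$. The decisive observation is that for this mapping $a_3$ is real and negative while $a_3 - 2a_2^2 = a_3 + \frac{(\Phi'(0))^2}{2}$ is real and positive, so the two triangle inequalities $|a_3 - 1| = 1 + |a_3|$ and $|2a_2^2 - a_3 - 1| = 1 + |a_3 - 2a_2^2|$ turn into equalities simultaneously, and the product then attains the bound.

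The step I expect to be the crux is the coordinated choice of the two Fekete--Szeg\"{o} parameters. A direct triangle-inequality estimate of $|2a_2^2 - a_3 - 1|$ that bounds $|a_2|^2$ and $|a_3|$ separately overshoots, because at the extremizer the contributions of $-2a_2^2$ and $a_3$ partially cancel; recognizing $2a_2^2 - a_3$ as the $\lambda = 2$ functional is what captures this cancellation. Verifying that the two-sided restriction on $\Phi''(0)$ is exactly the pair of inequalities forcing both functionals onto their nontrivial branches is therefore the heart of the matter, and it simultaneously explains why precisely these hypotheses on $\Phi$ are imposed.
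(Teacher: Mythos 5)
Your proof is correct, and it reaches the same sharp constant as the paper by a genuinely different decomposition. The paper does not factor the determinant: it groups additively, writing $2a_2^2a_3 - a_3^2 - 2a_2^2 + 1 = 1 - 2a_2^2 - a_3(a_3-2a_2^2)$, so that $\vert 2a_2^2a_3 - a_3^2 - 2a_2^2 + 1\vert \le 1 + 2\vert a_2\vert^2 + \vert a_3\vert\,\vert a_3 - 2a_2^2\vert$, and then uses three estimates: $\vert a_2\vert \le \Phi'(0)/2$ from (\ref{a2BC}), $\vert a_3\vert \le A := \tfrac{\Phi'(0)}{6}\bigl(\tfrac{1}{2}\tfrac{\Phi''(0)}{\Phi'(0)}+\Phi'(0)\bigr)$ from (\ref{a3BC2}), and $\vert a_3 - 2a_2^2\vert \le B := \tfrac{\Phi'(0)}{6}\bigl(2\Phi'(0)-\tfrac{1}{2}\tfrac{\Phi''(0)}{\Phi'(0)}\bigr)$ from (\ref{FS2BC}); the last two are exactly your $\lambda=0$ and $\lambda=2$ specializations of (\ref{FSBC}), selected by the same two halves of the hypothesis on $\Phi''(0)$ that you isolate. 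Your multiplicative route via $(a_3-1)(2a_2^2-a_3-1)$ bounded by $(1+A)(1+B)$ never needs (\ref{a2BC}); it lands on the same number because $A+B = \tfrac{1}{2}(\Phi'(0))^2 = 2\bigl(\Phi'(0)/2\bigr)^2$, so $(1+A)(1+B) = 1 + \tfrac{1}{2}(\Phi'(0))^2 + AB$ agrees with the paper's $1 + 2\vert a_2\vert^2 + \vert a_3\vert\,\vert a_3-2a_2^2\vert$ once the individual bounds are inserted. What your factorization buys is the sharpness discussion: it makes visible that equality holds in both triangle inequalities simultaneously at the extremal mapping (\ref{extB}), since there $a_3 = -A<0$ and $a_3 - 2a_2^2 = B>0$; the paper instead simply substitutes the coefficients (\ref{cftbC}) into the determinant and evaluates.

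One caveat, which concerns the paper rather than your reasoning: what both arguments actually prove, and what (\ref{extB}) actually attains, is $1 + \tfrac{1}{2}(\Phi'(0))^2 + \tfrac{(\Phi'(0))^2}{36}\bigl(\tfrac{1}{2}\tfrac{\Phi''(0)}{\Phi'(0)}+\Phi'(0)\bigr)\bigl(2\Phi'(0)-\tfrac{1}{2}\tfrac{\Phi''(0)}{\Phi'(0)}\bigr)$, not the formula displayed in the theorem, whose coefficients $2$ and $\tfrac{1}{4}$ and inner factor $\tfrac{\Phi''(0)}{2\Phi'(0)}-3\Phi'(0)$ appear to be misprints: for $\Phi(z)=(1+z)/(1-z)$ the displayed formula evaluates to $-6$, which cannot bound a modulus, whereas the quantity both proofs derive evaluates to $4$, in agreement with Theorem B (and, for general $\alpha$, with Theorem D). So your closing claim that multiplying the two estimates ``produces the asserted inequality'' is true for the corrected assertion --- the same one the paper's own proof produces --- rather than for the printed display.
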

\begin{proof}
     Since $ 2 \Phi'(0) \leq 4 (\Phi'(0))^2 - \Phi''(0)  $, inequality (\ref{FSBC}) gives
\begin{equation}\label{FS2BC}
\begin{aligned}
   \bigg\vert  \frac{ l_z (D^3 G(0) (z^3))}{3! \vert\vert z \vert\vert^3} -  2 \bigg(\frac{ l_z (D^2 G(0) (z^2))}{2! \vert\vert z \vert\vert^2} \bigg)^2 \bigg\vert
   \leq \frac{ \Phi'(0) }{6}   \bigg( 2  \Phi'(0) - \frac{1}{2} \frac{\Phi''(0)}{\Phi'(0)}  \bigg)
\end{aligned}
\end{equation}
    for $ z \in \mathbb{B}\setminus \{0 \}.$ Also, since $\Phi$ satisfy $ \Phi''(0) + 2 (\Phi'(0))^2 \geq 2 \Phi'(0) $, therefore by (\ref{FSBC}), we have
\begin{equation}\label{a3BC2}
     \bigg\vert  \frac{ l_z (D^3 G(0) (z^3))}{3! \vert\vert z \vert\vert^3} \bigg\vert \leq \frac{ \Phi'(0)}{6}   \bigg( \frac{1}{2} \frac{\Phi''(0)}{\Phi'(0)} +  \Phi'(0) \bigg), \quad z \in \mathbb{B}\setminus \{0\}.
\end{equation}
    Also, we have
\begin{align*}
   \vert 2 a_2^{2} a_3 - 2 a_2^2-  a_3^2+1 \vert  \leq  1 + 2 \vert a_2 \vert^2 + \vert a_3\vert \vert a_3 - 2 a_2^2\vert.
\end{align*}
    The required bound follows directly from the above inequality along with the bounds given in (\ref{a2BC}) and (\ref{a3BC2}), and the bound of $\vert a_3 - 2 a_2^2\vert$ given by (\ref{FS2BC}).

   Equality case holds for the function $G(z)$ defined in (\ref{extB}) as for this function, we have $a_2=  \frac{i \Phi'(0)}{2}$, $a_3 =- \frac{1}{6} \left( \frac{\Phi''(0)}{2} + (\Phi'(0))^2 \right) $ and hence
   $$2 a_2^{2} a_3 - 2 a_2^2-  a_3^2+1 = 2 ( \Phi'(0))^2 + \frac{ ( \Phi'(0) )^2}{12} \bigg( \frac{\Phi''(0)}{2 \Phi'(0)} -3 \Phi'(0) \bigg)  \bigg( \frac{\Phi''(0)}{2 \Phi'(0)} + \Phi'(0) \bigg) + 1, $$
   which establish the sharpness of the result.
\end{proof}
%%%%%%%%%%%%%%%%%%%%%%%%%%%%%%%%%%%%%%%%%%%%%%%%%%%%%%%%%%%%%%%%%%%%%%%%%%%%%%%%%%%%%%%%%%%%%%%%%%%%%%%%%%%%%%%%%%%%%%%%%%
%%%%%%%%%%%%%%%%%%%%%%%%%%%%%%%%%%%%%%%%%%%%%%%%%%%%%%%%%%%%%%%%%%%%%%%%%%%%%%%%%%%%%%%%%%%%%%%%%%%%%%%%%%%%%%%%%%%%%%%%%%
\begin{theorem}\label{ThmUn1C}
    Let $g \in \mathcal{H}(\mathbb{U}^n, \mathbb{C})$ with $g(0)=1$, $g(z) \neq 0$, $z\in \mathbb{U}^n$ and suppose that  $G(z) =  z g(z)$. If $(D G(z))^{-1} ( D^2 G(z) (z^2) + D G(z) (z) \in \mathcal{M}_\Phi$ such that $\Phi$ satisfies
    $$ \lvert \Phi''(0) + 2 (\Phi'(0))^2 \rvert \geq 2 \Phi'(0) ,$$
   then
\begin{equation}\label{res}
\begin{aligned}
\left.
\begin{array}{ll}
   \bigg\| \bigg(  & \dfrac{ D^3 G(0) (z^3)}{3! } \bigg)^2  - \bigg( \dfrac{D^2 G(0) (z^2)}{2! } \bigg)^2 \bigg\|  \\
     & \leq  \dfrac{ ( \Phi'(0) )^2  \| z\|^6}{36} \bigg( \dfrac{1}{2} \dfrac{\Phi''(0)}{\Phi'(0)} +  \Phi'(0)  \bigg)^2  +\dfrac{\left( \Phi'(0) \right)^2 \| z \|^4}{4} , \quad z \in \mathbb{U}^n.
\end{array}
\right\}
\end{aligned}
\end{equation}
   The bound is sharp.
\end{theorem}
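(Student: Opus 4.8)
The plan is to exploit the product structure $G(z)=zg(z)$, with $g$ scalar-valued, in order to collapse the vector-valued estimate onto the scalar bounds already obtained in the proof of Theorem~\ref{thm1C}. First I would expand the low-order Fréchet derivatives of $G$ at the origin componentwise. Writing $G_j(z)=z_j g(z)$ and applying the Leibniz rule, the terms carrying a factor $z_j$ vanish at the origin (where $z_j=0$ and $g(0)=1$), which leaves
\begin{equation*}
   \frac{[D^2 G(0)(z^2)]_j}{2!}=z_j\,Dg(0)(z),\qquad \frac{[D^3 G(0)(z^3)]_j}{3!}=z_j\,\frac{D^2 g(0)(z^2)}{2}.
\end{equation*}
The decisive observation is that each component is $z_j$ times a scalar that does \emph{not} depend on the index $j$.

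Consequently the $j$-th component of the bracketed vector factors as
\begin{equation*}
   \left(\frac{[D^3 G(0)(z^3)]_j}{3!}\right)^2-\left(\frac{[D^2 G(0)(z^2)]_j}{2!}\right)^2
   =z_j^2\left[\left(\frac{D^2 g(0)(z^2)}{2}\right)^2-(Dg(0)(z))^2\right],
\end{equation*}
with the bracketed scalar common to every $j$. Taking the sup-norm on $\mathbb{C}^n$ and using $\max_j|z_j|=\|z\|$, the left-hand norm in (\ref{res}) equals $\|z\|^2$ times the modulus of that scalar. This is precisely the step that reduces the whole polydisc problem to a single complex variable.

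To control the scalar I would invoke the estimates (\ref{a2BC}) and (\ref{a3BC}) established inside the proof of Theorem~\ref{thm1C}; they apply here verbatim because Theorem~\ref{ThmUn1C} carries the same hypothesis $\lvert\Phi''(0)+2(\Phi'(0))^2\rvert\geq 2\Phi'(0)$. Choosing $l_z$ to be the coordinate functional $w\mapsto(\bar z_j/|z_j|)w_j$ at a maximal coordinate $j$ (so that $l_z(z)=\|z\|$ and $l_z\in T_z$), these inequalities translate into $|Dg(0)(z)|\leq\tfrac{\Phi'(0)}{2}\|z\|$ and $\bigl|\tfrac{1}{2}D^2 g(0)(z^2)\bigr|\leq\tfrac{\Phi'(0)}{6}\bigl(\tfrac12\tfrac{\Phi''(0)}{\Phi'(0)}+\Phi'(0)\bigr)\|z\|^2$. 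Splitting the difference of squares by the triangle inequality and substituting, the two contributions generate exactly the $\|z\|^6$ and $\|z\|^4$ terms on the right-hand side of (\ref{res}).

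Sharpness is inherited from the extremal mapping (\ref{extB}). For that mapping the values recorded in (\ref{cftbC}) give $\tfrac12 D^2 g(0)(z^2)/\|z\|^2=-\tfrac16\bigl(\tfrac{\Phi''(0)}{2}+(\Phi'(0))^2\bigr)$ and $Dg(0)(z)/\|z\|=\tfrac{i\Phi'(0)}{2}$; since the two squared terms then carry opposite signs, no cancellation occurs, the triangle inequality becomes an equality, and the bound is attained. The only genuinely delicate point is the componentwise differentiation establishing the factorization, together with the correct identification of the coordinate functional $l_z$ so that the scalar estimates transfer; once these are in place, the remainder is a routine substitution.
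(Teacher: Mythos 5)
Your proposal is correct and its skeleton matches the paper's: both arguments reduce the vector estimate to the two scalar bounds on $Dg(0)(z)$ and $\tfrac{1}{2}D^2g(0)(z^2)$, split the difference of squares by the triangle inequality, and obtain sharpness from the same extremal mapping, whose purely imaginary second coefficient makes the two squares carry opposite signs so that no cancellation occurs. Where you genuinely diverge is in how the componentwise bounds are obtained. The paper proves the bound on $D^2G_k(0)(z^2)/2!$ only at a maximal coordinate $k$ (where the polydisc class condition is available) and then extends it to all $k$ and all $z$ via the maximum modulus theorem on the polydisc, and it bounds $D^3G_k(0)(z^3)/3!$ by citing the polydisc Fekete-Szeg\"{o} inequality (\ref{FSUnC}) of Xu et al.; your factorization $[D^2G(0)(z^2)]_j/2!=z_j\,Dg(0)(z)$ and $[D^3G(0)(z^3)]_j/3!=z_j\,\tfrac{1}{2}D^2g(0)(z^2)$, with scalar factors independent of $j$, makes the extension to all components automatic and eliminates the maximum modulus step entirely --- a real simplification, and arguably the cleaner argument. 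One spot needs tightening: you justify importing (\ref{a2BC}) and (\ref{a3BC}) ``verbatim'' by pointing to the common hypothesis on $\Phi$, but those estimates were proved under the Banach-space membership $(DG(z))^{-1}(D^2G(z)(z^2)+DG(z)(z))\in\mathcal{M}_\Phi$, a condition imposed for \emph{every} $l_z\in T_z$, whereas Theorem \ref{ThmUn1C} assumes only the polydisc form of $\mathcal{M}_\Phi$, a condition at maximal coordinates; in particular (\ref{a3BC}) rests on the cited Banach-space inequality (\ref{FSBC}). The fix is one line: by (\ref{useC}) the mapping in question equals $q(z)z$ for a scalar-valued function $q$, so $l_z(q(z)z)/\|z\|=q(z)=p_j(z)/z_j$ for every $l_z\in T_z$ and every maximal coordinate $j$, hence the two membership conditions coincide for mappings of the form $G=zg$ and your transfer is legitimate (alternatively, cite the polydisc inequality (\ref{FSUnC}) with $\lambda=0$, as the paper does).
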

\begin{proof}
      For $z\in \mathbb{U}^n \setminus \{0\}$ and $z_0 = \frac{z}{\| z\|}$, define $h_k : \mathbb{U} \rightarrow \mathbb{C}$ such that
\begin{equation}\label{hkzeta}
   h_k (\zeta) =
\left\{
\begin{array}{ll}
     \dfrac{\zeta z_k}{p_k (\zeta z_0) \| z_0 \|}, & \zeta \neq 0,\\
     1 , & \zeta =0,
\end{array}
\right.
\end{equation}
    where $p(z) = (D G(z))^{-1} ( D^2 G(z) (z^2) + D G(z) (z))$ and $k$ satisfies $\vert z_k \vert = \| z \| = \max_{1\leq j \leq n} \{ z_j \}$.  By (\ref{useC}), we have
    $$ h_k(\zeta) =   \frac{ D^2 g(\zeta z_0) ((\zeta z_0)^2 ) + 3 D g(\zeta z_0)(\zeta z_0) + g(\zeta z_0) }{ g(\zeta z_0)  + D g(\zeta z_0) ( \zeta z_0)} ,   $$
    or, equivalently
    $$ h_k(\zeta)  (  g(\zeta z_0)  + D g(\zeta z_0) ( \zeta z_0))  = D^2 g(\zeta z_0) ((\zeta z_0)^2 ) + 3 D g(\zeta z_0)(\zeta z_0) + g(\zeta z_0).$$
    Comparison of same homogeneous expansions in the Taylor series expansions in terms of $\zeta$ yield
\begin{equation}\label{11C}
     h_k'(0) = 2 D g(0)(z_0).
\end{equation}
    Furthermore, from $G(z_0) = z_0 g(z_0)$, we have
\begin{equation}\label{111C}
    \frac{D^2 G_k (0)(z_0^2)}{2!} = D g(0) (z_0) \frac{z_j}{\| z \|}.
\end{equation}
   Combining (\ref{11C}) and (\ref{111C}) with the fact $\vert h'_k(0) \vert \leq  \Phi'(0)$ gives
   $$ \bigg\vert \frac{D^2 G_k (0)(z_0^2)}{2!}  \frac{\| z \|}{z_j}\bigg\vert \leq \frac{\Phi'(0)}{2}.$$
   If $z_0 \in \partial \mathbb{U}^n$, then we get
\begin{equation*}
    \bigg\vert \frac{D^2 G_k (0)(z_0^2)}{2!} \bigg\vert \leq \frac{\Phi'(0)}{2}.
\end{equation*}
   Since
    $$  \frac{D^2 G_k (0)(z_0^2)}{2!}, \quad k=1,2,\cdots n $$
   are holomorphic functions on  $\overline{\mathbb{U}}^n$, therefore by the maximum modulus theorem of holomorphic functions on the unit polydisc, we have
   $$ \bigg\vert \frac{D^2 G_k (0)(z_0^2)}{2!} \bigg\vert \leq \frac{\Phi'(0)}{2} , \quad z_0 \in \mathbb{U}^n,k=1,2,\cdots n .   $$
   That is
\begin{equation}\label{a2UnC}
    \bigg\vert \frac{D^2 G_k (0)(z^2)}{2!} \bigg\vert \leq \frac{\Phi'(0)  \|z\|^2}{2} , \quad z \in \partial\mathbb{U}^n,k=1,2,\cdots n .
\end{equation}
   For $\lambda \in \mathbb{C}$, Xu et al. \cite[Theorem 3.2]{XuC} established that
\begin{equation}\label{FSUnC}
\begin{aligned}
\left.
\begin{array}{ll}
    \bigg\vert   \dfrac{ D^3 G_k(0) (z^3)}{3! }  -  &\lambda \dfrac{1}{2} D^2  G_k (0)  \bigg( z, \dfrac{D^2 G(0) (z^2)}{2!} \bigg) \bigg\vert \\
   & \leq \dfrac{\vert \Phi'(0) \vert \| z\|^3}{6} \max \bigg\{ 1  ,\left\lvert \dfrac{1}{2} \dfrac{\Phi''(0)}{\Phi'(0)} + \bigg(1 - \dfrac{3}{2} \lambda \bigg)  \Phi'(0)  \right\rvert \bigg\} .
\end{array}
\right\}
\end{aligned}
\end{equation}
    Since  $\lvert \Phi''(0) + 2 (\Phi'(0))^2 \rvert \geq 2 \Phi'(0)$, therefore, from (\ref{FSUnC}), we get
\begin{equation}\label{a3UnC}
    \bigg\vert  \frac{ D^3 G_k(0) (z^3)}{3! }   \bigg\vert  \leq \frac{ \Phi'(0)  \| z\|^3}{6}   \left( \frac{1}{2} \frac{\Phi''(0)}{\Phi'(0)} +   \Phi'(0) \right)
\end{equation}
    for $z \in \mathbb{U}^n $ and $k = 1,2, \cdots n.$ Using the bounds from (\ref{a2UnC}) and (\ref{a3UnC}), we have
\begin{align*}
   \bigg\vert \bigg(  \frac{ D^3 G_k(0) (z^3)}{3! } \bigg)^2  - \bigg( & \frac{D^2 G_k(0) (z^2)}{2! }  \bigg)^2  \bigg\vert  \\
      &\leq  \frac{ ( \Phi'(0) )^2  \| z\|^6}{36} \bigg( \frac{1}{2} \frac{\Phi''(0)}{\Phi'(0)} +  \Phi'(0)  \bigg)^2  +\frac{\left( \Phi'(0) \right)^2 \| z \|^4}{4}
\end{align*}
   for $z \in \mathbb{U}^n$  and $k=1,2, \cdots n.$ Therefore,
\begin{align*}
   \bigg\| \bigg( & \frac{ D^3 G(0) (z^3)}{3! } \bigg)^2  - \bigg( \frac{D^2 G(0) (z^2)}{2! } \bigg)^2 \bigg\| \\
      &\leq  \frac{ ( \Phi'(0) )^2  \| z\|^6}{36} \bigg( \frac{1}{2} \frac{\Phi''(0)}{\Phi'(0)} +  \Phi'(0)  \bigg)^2  +\frac{\left( \Phi'(0) \right)^2 \| z \|^4}{4} , \quad z \in \mathbb{U}^n ,
\end{align*}
    which is the required bound.

   To prove the sharpness of the bound, consider the function $G$ given by
\begin{equation}\label{extUnC}
     D G(z) = I \exp \int_0^{z_1} \frac{\Phi(i t)-1}{t} dt.
\end{equation}
    It can be showed that $(D G(z))^{-1} ( D^2 G(z) (z^2) + D G(z) (z) ) \in \mathcal{M}_\Phi$  and for $z =(r,0,\cdots,0)'$ in (\ref{extUnC}), the equality case holds in (\ref{res}).
\end{proof}
       %%%%%%%%%%%%%%%%%%%%%%%%%%%%%%%%%%%%%%%%%%%%%%%%%%%%%%%%%%%%%%%%%%%%%%%%%%%%%%%%%%%%%%%%%%%%%%%%%%%%%%%%%%%%%%%%%%%%%%%%%%%%%%%%%%%%%%%%%%%%%%%%%%%%%%%%%%%%%%
   %%%%%%%%%%%%%%%%%%%%%%%%%%%%%%%%%%%%%%%%%%%%%%%%%%%%%%%%%%%%%%%%%%%%%%%%%%%%%%%%%%%%%%%%%%%%%%%%%%%%%%%%%%%%%%%%%%%%%%%%%%%%%%%%%%%%%%%%%%%%%%%%%%%%%%%%%%%%%%
              %%%%%%%%%%%%%%%%%%%%%%%%%%%%%%%%%%%%%%%%%%%%%%%%%%%%%%%%%%%%%%%%%%%%%%%%%%%%%%%%%%%%%%%%%%%%%%%%%%%%%%%%%%%%%%%%%%%%%%%%%%%%%%%%%%%%%%%%%%%%%%%%%%%%%%%%%%%%%%
\begin{theorem}\label{bndUnC}
    Let $g \in \mathcal{H}(\mathbb{U}^n, \mathbb{C})$, $g(0)=1$, $g(z)\neq 0$, $z\in \mathbb{U}^n$   and suppose that $G(z) =  z g(z)$.     If $(D G(z))^{-1} ( D^2 G(z) (z^2) + D G(z) (z) ) \in \mathcal{M}_\Phi$ and $\Phi$ satisfy
   $$ 2 \Phi'(0) - 2 (\Phi'(0))^2  \leq \Phi''(0) \leq 4 (\Phi'(0))^2 - 2 \Phi'(0),$$
   then
\begin{align*}
     \| 2 a_2^2 a_3 & - a_3^2 - 2 a_2^2 + 1 \|   \\
     & \leq 1 +  \frac{( \Phi'(0) )^2 \| z\|^6}{36}   \bigg( 2 \Phi'(0) - \frac{1}{2} \frac{\Phi''(0)}{\Phi'(0)} \bigg)    \bigg( \frac{1}{2} \frac{\Phi''(0)}{\Phi'(0)} +   \Phi'(0) \bigg)  + \frac{( \Phi'(0) )^2 \| z\|^4}{2},
\end{align*}
    where
\begin{align*}
       a_3 = \frac{ D^3 G(0) (z^3)}{3! } \quad \text{and}  \quad a_2^2 =  \frac{1}{2} D^2  G (0)  \bigg( z, \frac{D^2 G(0) (z^2)}{2!}\bigg).
\end{align*}
   The bound is sharp.
\end{theorem}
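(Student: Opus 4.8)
The plan is to read this as the polydisc counterpart of Theorem~\ref{thm2C}, repeating that argument componentwise exactly as the proof of Theorem~\ref{ThmUn1C} repeated the proof of Theorem~\ref{thm1C}. So I would fix $z\in\mathbb{U}^n\setminus\{0\}$, take the index $k$ with $|z_k|=\|z\|=\max_j|z_j|$, and reuse the scalar functions $h_k$ from (\ref{hkzeta}); this makes all the componentwise coefficient estimates from the proof of Theorem~\ref{ThmUn1C} available here. Three ingredients are needed, matching the two hypotheses on $\Phi$: first, from (\ref{a2UnC}) the bound $|D^2G_k(0)(z^2)/2!|\le \Phi'(0)\|z\|^2/2$, which upon squaring gives $\|a_2^2\|\le(\Phi'(0))^2\|z\|^4/4$; second, the estimate for $\|a_3\|$ in (\ref{a3UnC}), available because the lower hypothesis $2\Phi'(0)-2(\Phi'(0))^2\le\Phi''(0)$ is precisely $|\Phi''(0)+2(\Phi'(0))^2|\ge 2\Phi'(0)$; and third, the Fekete--Szeg\"o inequality (\ref{FSUnC}) at $\lambda=2$.

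For the third ingredient I would note that the upper hypothesis $\Phi''(0)\le 4(\Phi'(0))^2-2\Phi'(0)$ is the same as $2\Phi'(0)\le 4(\Phi'(0))^2-\Phi''(0)$, which forces the maximum in (\ref{FSUnC}) to be attained by its second entry when $\lambda=2$, exactly as in the passage from (\ref{FSBC}) to (\ref{FS2BC}). This produces, for each $k$,
\[
\left|(a_3)_k-2(a_2^2)_k\right|\le\frac{\Phi'(0)\|z\|^3}{6}\left(2\Phi'(0)-\frac12\frac{\Phi''(0)}{\Phi'(0)}\right).
\]

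The core step is then the componentwise algebraic identity underlying Theorem~\ref{thm2C},
\[
2(a_2^2)_k(a_3)_k-((a_3)_k)^2-2(a_2^2)_k+1=1-2(a_2^2)_k-(a_3)_k\big((a_3)_k-2(a_2^2)_k\big).
\]
Taking moduli, applying the triangle inequality, and passing to the supremum over $k$ (which realises the norm on $\mathbb{U}^n$), the constant $1$ contributes $1$, the term $2\|a_2^2\|$ contributes $(\Phi'(0))^2\|z\|^4/2$, and the product $\|a_3\|\,\|a_3-2a_2^2\|$ contributes the $\|z\|^6$ term; summing these reproduces the asserted right-hand side. For sharpness I would evaluate the extremal mapping (\ref{extUnC}) at $z=(r,0,\dots,0)'$ with $0<r<1$, reducing $a_2^2$ and $a_3$ to the scaled values already computed for the ball extremal (\ref{extB}) in Theorem~\ref{thm2C}, and check that $2a_2^2a_3-a_3^2-2a_2^2+1$ attains the bound.

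I expect the difficulty to be bookkeeping rather than conceptual. The two hypotheses on $\Phi$ must be verified to be simultaneously the correct sign conditions for (\ref{a3UnC}) and for the $\lambda=2$ branch of (\ref{FSUnC}); and since the quantity written $a_2^2$ is defined through the bilinear form $\tfrac12 D^2G(0)(z,D^2G(0)(z^2)/2!)$ rather than as a literal square, one must be careful to estimate it consistently in both the upper bound and the sharpness calculation.
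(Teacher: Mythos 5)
Your proposal is correct and follows essentially the same route as the paper's proof: the same componentwise reduction via the functions $h_k$ of (\ref{hkzeta}), the same algebraic decomposition $1-2(a_2^2)_k-(a_3)_k\big((a_3)_k-2(a_2^2)_k\big)$, the same three estimates (namely (\ref{a2UnC}) upgraded to (\ref{a22Unc}), the bound (\ref{a3UnC}), and (\ref{FSUnC}) with $\lambda=2$ yielding (\ref{FS2UnC})), and the same extremal mapping (\ref{extUnC}) evaluated at $z=(r,0,\dots,0)'$. The one step you compress---passing from (\ref{a2UnC}) to the bound on the bilinear quantity $a_2^2$ ``upon squaring''---is exactly what the paper justifies by the identity $\tfrac{1}{2}D^2G_k(0)\big(z_0,\tfrac{1}{2!}D^2G(0)(z_0^2)\big)\tfrac{z_k}{\|z\|}=\big(\tfrac{1}{2!}D^2G_k(0)(z_0^2)\big)^2$ from \cite{XuLiu} together with the maximum modulus theorem on the polydisc, a subtlety you correctly flag as requiring care.
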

\begin{proof}
    Since $G(z) = z g(z)$, we have
    $$  \frac{1}{2} D^2  G_k (0)  \bigg( z_0, \frac{D^2 G(0) (z_0^2)}{2!}\bigg) \frac{ z_k}{\| z \|}  = \bigg( \frac{D^2 G_k(0) (z_0^2)}{2!} \bigg)^2, \quad k=1,2,\cdots n , $$
    where $z_0 = \frac{z_k}{\| z\|}$ and $k$ satisfies $\vert z_k \vert = \| z \| = \max_{ 1 \leq j \leq n}\{ \vert z_j \vert \}$ (see \cite{XuLiu}).
    If $z_0 \in \partial \mathbb{U}^n$, then
    $$ \bigg\vert  \frac{1}{2} D^2  G_k (0)  \bigg( z_0, \frac{D^2 G(0) (z_0^2)}{2!}\bigg)  \bigg\vert = \bigg\vert \frac{D^2 G_k(0) (z_0^2)}{2!} \bigg\vert^2. $$
   Considering the function $h_k(\zeta)$ given in (\ref{hkzeta}) and following the same methodology as in the proof of Theorem \ref{ThmUn1C}, we obtain (\ref{a2UnC}), which together with the above relation yields
\begin{equation}\label{a22Unc}
   \bigg\vert  \frac{1}{2} D^2  G_k (0)  \bigg( z_0, \frac{D^2 G(0) (z_0^2)}{2!}\bigg)  \bigg\vert  \leq \frac{(\Phi'(0))^2 \|z \|^4}{4}.
\end{equation}
    Also, since $\Phi$ satisfy $ 2 \Phi'(0) \leq 4 (\Phi'(0))^2 -  \Phi''(0)$, therefore from (\ref{FSUnC}), we obtain
\begin{equation}\label{FS2UnC}
\begin{aligned}
\left.
\begin{array}{ll}
    \bigg\vert  \dfrac{ D^3 G_k(0) (z^3)}{3! }  - &  D^2  G_k (0)  \bigg( z, \dfrac{D^2 G(0) (z^2)}{2!} \bigg) \bigg\vert \\
   & \leq \dfrac{\vert \Phi'(0) \vert \| z\|^3}{6}   \left(  2  \Phi'(0) - \dfrac{1}{2} \dfrac{\Phi''(0)}{\Phi'(0)} \right) , \;\; k= 1,2, \cdots n.
\end{array}
\right\}
\end{aligned}
\end{equation}
   Thus, from (\ref{a3UnC}),  (\ref{a22Unc}) and (\ref{FS2UnC}), we have
\begin{align*}
     \bigg\vert & 1 + D^2  G_k (0)  \bigg( z, \frac{D^2 G(0) (z^2)}{2!}\bigg) \bigg(\frac{ D^3 G_k(0) (z^3)}{3! } \bigg) -  D^2  G_k (0)  \bigg( z, \frac{D^2 G(0) (z^2)}{2!}\bigg)\\
      & - \bigg( \frac{ D^3 G_k(0) (z^3)}{3! } \bigg)^2 \bigg\vert  \\
    & \leq  1 + \bigg\vert \frac{ D^3 G_k(0) (z^3)}{3! } \bigg\vert \bigg\vert \frac{ D^3 G_k(0) (z^3)}{3! } - D^2  G_k (0)  \bigg( z, \frac{D^2 G(0) (z^2)}{2!}\bigg) \bigg\vert \\
    &\;\; \;\;\;\;\; + \bigg\vert D^2  G_k(0)  \bigg( z, \frac{D^2 G(0) (z^2)}{2!}\bigg) \bigg\vert \\
    & \leq  1 +  \frac{( \Phi'(0) )^2 \| z\|^6}{36}   \bigg( 2 \Phi'(0) - \frac{1}{2} \frac{\Phi''(0)}{\Phi'(0)} \bigg)    \bigg( \frac{1}{2} \frac{\Phi''(0)}{\Phi'(0)} +   \Phi'(0) \bigg)  + \frac{( \Phi'(0) )^2 \| z\|^4}{2}
\end{align*}
    for $ z\in \mathbb{U}^n$ and $k = 1,2 , \cdots n.$
    Therefore
\begin{align*}
     \bigg\| 1 &+ D^2  G (0)  \bigg( z, \frac{D^2 G(0) (z^2)}{2!}\bigg) \bigg(\frac{ D^3 G(0) (z^3)}{3! } \bigg) -  D^2  G (0)  \bigg( z, \frac{D^2 G(0) (z^2)}{2!}\bigg)\\
      & - \bigg( \frac{ D^3 G(0) (z^3)}{3! } \bigg)^2 \bigg\|  \\
    & \leq  1 +  \frac{( \Phi'(0) )^2 \| z\|^6}{36}   \bigg( 2 \Phi'(0) - \frac{1}{2} \frac{\Phi''(0)}{\Phi'(0)} \bigg)    \bigg( \frac{1}{2} \frac{\Phi''(0)}{\Phi'(0)} +   \Phi'(0) \bigg)  + \frac{( \Phi'(0) )^2 \| z\|^4}{2},
\end{align*}
     which is the required bound.

     Sharpness of the bound can be seen from the function $G(z)$ defined in (\ref{extUnC}) by taking $z = (r,0,\cdots,0)$, which completes the proof.
\end{proof}
%%%%%%%%%%%%%%%%%%%%%%%%%%%%%%%%%%%%%%%%%%%%%%%%%%%%%%%%%%%%%%%%%%%%%%%%%%%%%%%%%%%%%%%%%%%%%%%%%%%%%%%%%%%%%%%%%%%%%%%%%%
%%%%%%%%%%%%%%%%%%%%%%%%%%%%%%%%%%%%%%%%%%%%%%%%%%%%%%%%%%%%%%%%%%%%%%%%%%%%%%%%%%%%%%%%%%%%%%%%%%%%%%%%%%%%%%%%%%%%%%%%%%
     Note that if $g \in \mathcal{H}(\mathbb{B})$ and $(D g(z))^{-1} ( D^2 g(z) (z^2) + D g(z) (z) ) \in \mathcal{M}_\Phi$, then various choices of $\Phi$ give different subclasses of holomorphic mappings. For instance, when $\Phi(z) = (1 + (1 - 2\alpha)z)/(1-z)$ and $\Phi(z) = (1 + z)/(1-z)$, we easily obtain $ g \in \mathcal{K}_\alpha(\mathbb{B})$ and $g \in \mathcal{K}(\mathbb{B})$, respectively. For these classes, Theorem \ref{thm1C} to Theorem \ref{bndUnC} yield the following results.
\begin{corollary}\label{crl1C}
      Let $g\in \mathcal{H}( \mathbb{B}, \mathbb{C})$ and $G(z) = z g(z) \in \mathcal{C}_\alpha(\mathbb{B})$. Then the following inequality holds:
\begin{align*}
       \bigg\vert \bigg( \frac{ l_z (D^2 G(0) (z^2))}{2! \vert\vert z \vert\vert^2} \bigg)^2  -  & \bigg(\frac{ l_z (D^3 G(0) (z^3))}{3! \vert\vert z \vert\vert^3} \bigg)^2 \bigg\vert \\
       &\leq  \frac{2 (1-\alpha)^2 (2 \alpha^2 - 6 \alpha +9)}{9}, \;\;\;  l_z \in T_z ,\; z\in \mathbb{B}\setminus\{0\} .
\end{align*}
      If $\mathbb{B} = \mathbb{U}^n$ and $X = \mathbb{C}^n$, then
\begin{equation}\label{equiv1}
\begin{aligned}
\left.
\begin{array}{ll}
     \bigg\| \bigg(  \dfrac{ D^3 G(0) (z^3)}{3! } \bigg)^2  & -  \bigg( \dfrac{D^2 G(0) (z^2)}{2! }  \bigg)^2  \bigg\| \\
      & \leq  (1 - \alpha)^2 \|z\|^4 + \dfrac{ ( 2 \alpha^2 - 5 \alpha + 3)^2 \|z \|^6 }{9} , \; z \in \mathbb{U}^n.
\end{array}
\right\}
\end{aligned}
\end{equation}
       All these bounds are sharp.
\end{corollary}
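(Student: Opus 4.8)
The plan is to obtain both displayed inequalities as direct specializations of Theorem \ref{thm1C} and Theorem \ref{ThmUn1C}, using the identification (recorded just before the corollary) of the relevant class with the choice $\Phi(z) = (1+(1-2\alpha)z)/(1-z)$. First I would extract the only two pieces of data that enter the bounds, namely $\Phi'(0)$ and $\Phi''(0)$. Writing $\Phi(z) = (1+(1-2\alpha)z)\sum_{n\geq 0} z^n = 1 + 2(1-\alpha)z + 2(1-\alpha)z^2 + \cdots$, one reads off $\Phi'(0) = 2(1-\alpha)$ and $\Phi''(0) = 4(1-\alpha)$. Consequently the recurring quantity simplifies cleanly: $\tfrac{1}{2}\tfrac{\Phi''(0)}{\Phi'(0)} + \Phi'(0) = 1 + 2(1-\alpha) = 3 - 2\alpha$.

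Next I would check that this $\Phi$ satisfies the hypotheses of both theorems. The condition required by Theorem \ref{thm1C} is $\lvert \Phi''(0) + 2(\Phi'(0))^2 \rvert \geq 2\Phi'(0) > 0$; here $\Phi''(0) + 2(\Phi'(0))^2 = 4(1-\alpha) + 8(1-\alpha)^2 = 4(1-\alpha)(3-2\alpha)$ while $2\Phi'(0) = 4(1-\alpha)$, so the inequality reduces to $3 - 2\alpha \geq 1$, which holds for every $\alpha \in [0,1)$, with strict positivity since $1-\alpha > 0$. Because this is verbatim the hypothesis of Theorem \ref{ThmUn1C} as well, that theorem applies unchanged in the polydisc setting.

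With the hypotheses verified, the two bounds follow by substitution and elementary algebra. For the Banach-space inequality, Theorem \ref{thm1C} gives the bound $\tfrac{(\Phi'(0))^2}{4} + \tfrac{(\Phi'(0))^2}{36}(3-2\alpha)^2 = (1-\alpha)^2\big[\,1 + \tfrac{(3-2\alpha)^2}{9}\,\big]$; expanding $(3-2\alpha)^2 = 4\alpha^2 - 12\alpha + 9$ collapses the bracket to $\tfrac{2(2\alpha^2 - 6\alpha + 9)}{9}$, yielding the stated constant $\tfrac{2(1-\alpha)^2(2\alpha^2 - 6\alpha + 9)}{9}$. For the polydisc case, Theorem \ref{ThmUn1C} gives $\tfrac{(\Phi'(0))^2\|z\|^6}{36}(3-2\alpha)^2 + \tfrac{(\Phi'(0))^2\|z\|^4}{4} = \tfrac{(1-\alpha)^2(3-2\alpha)^2}{9}\|z\|^6 + (1-\alpha)^2\|z\|^4$; using the factorization $(1-\alpha)(3-2\alpha) = 2\alpha^2 - 5\alpha + 3$ rewrites the first term as $\tfrac{(2\alpha^2 - 5\alpha + 3)^2}{9}\|z\|^6$, matching (\ref{equiv1}).

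Since nothing beyond a substitution is involved, there is no genuine obstacle here; the only points demanding care are the positivity check in the hypothesis verification and the identity $(1-\alpha)(3-2\alpha) = 2\alpha^2 - 5\alpha + 3$ that reconciles the two equivalent presentations of the constant. Sharpness is inherited for free: the extremal mappings of (\ref{extB}) and (\ref{extUnC}), specialized to this $\Phi$, already attain equality in Theorem \ref{thm1C} and Theorem \ref{ThmUn1C}, and hence in both displayed inequalities.
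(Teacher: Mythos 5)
Your proposal is correct and follows exactly the route the paper intends: the paper derives Corollary \ref{crl1C} by substituting $\Phi(z)=(1+(1-2\alpha)z)/(1-z)$, so that $\Phi'(0)=2(1-\alpha)$ and $\Phi''(0)=4(1-\alpha)$, into Theorem \ref{thm1C} and Theorem \ref{ThmUn1C}, with sharpness inherited from the extremal mappings (\ref{extB}) and (\ref{extUnC}). Your explicit verification of the hypothesis $\lvert \Phi''(0)+2(\Phi'(0))^2\rvert \geq 2\Phi'(0)>0$ and the algebraic identities $(3-2\alpha)^2+9 = 2(2\alpha^2-6\alpha+9)$ and $(1-\alpha)(3-2\alpha)=2\alpha^2-5\alpha+3$ supply the details the paper leaves implicit.
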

\begin{remark}
   In case of  $n=1$, $\mathbb{B}=\mathbb{U}$ and (\ref{equiv1}) reduces to the following:
\begin{equation*}
     \bigg\vert \bigg(  \frac{  G^{(3)}(0) }{3! } \bigg)^2  -  \bigg( \frac{ G''(0)}{2! }  \bigg)^2  \bigg\vert \leq \frac{2 (1-\alpha)^2 (2 \alpha^2 - 6 \alpha +9)}{9} ,
\end{equation*}
    which is equivalent to Theorem C.
\end{remark}
   %%%%%%%%%%%%%%%%%%%%%%%%%%%%%%%%%%%%%%%%%%%%%%%%%%%%%%%%%%%%%%%%%%%%%%%%%%%%%%%%%%%%%%%%%%%%%%%%%%%%%%%%%%%%%%%%%%%%%%%%%%%%%%%
   %%%%%%%%%%%%%%%%%%%%%%%%%%%%%%%%%%%%%%%%%%%%%%%%%%%%%%%%%%%%%%%%%%%%%%%%%%%%%%%%%%%%%%%%%%%%%%%%%%%%%%%%%%%%%%%%%%%%%%%%%%%%%%%%%%%%%%%%%
\begin{corollary}
      Let $g\in \mathcal{H}( \mathbb{B}, \mathbb{C})$ and $G(z) = z g(z) \in \mathcal{C}_\alpha(\mathbb{B})$. Then for $\alpha \in [0,1/2]$, the following sharp bound holds:
      $$   \vert 2 a_2^2 a_3  - a_3^2 - 2 a_2^2 + 1 \vert \leq \frac{8 \alpha^4 - 34 \alpha^3 + 71 \alpha^2 - 72 \alpha +36}{9}, \quad z \in \mathbb{B}\setminus\{0\},$$
      where
\begin{align*}
   a_3 = \frac{ l_z (D^3 G(0) (z^3))}{3! \vert\vert z \vert\vert^3} \quad\text{and} \quad  a_2 &= \frac{ l_z (D^2 G(0) (z^2))}{2! \vert\vert z \vert\vert^2}.
\end{align*}
\end{corollary}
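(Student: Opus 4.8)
The plan is to obtain this corollary as the specialization of Theorem~\ref{thm2C} to the class $\mathcal{C}_\alpha(\mathbb{B})$ of quasi-convex mappings of type $B$ and order $\alpha$, exactly as the preceding remark prescribes. Recall from that remark that $G(z) = z g(z) \in \mathcal{C}_\alpha(\mathbb{B})$, i.e. $(D G(z))^{-1}(D^2 G(z)(z^2) + D G(z)(z)) \in \mathcal{M}_\Phi$, corresponds to the choice $\Phi(z) = (1 + (1-2\alpha)z)/(1-z)$. Thus the first step is merely to certify that this $G$ is admissible for Theorem~\ref{thm2C} and to read off the two pieces of data $\Phi'(0)$ and $\Phi''(0)$ that drive the bound.

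First I would expand $\Phi(z) = (1+(1-2\alpha)z)\sum_{m\ge 0} z^m = 1 + 2(1-\alpha)z + 2(1-\alpha)z^2 + \cdots$, which gives $\Phi'(0) = 2(1-\alpha)$ and $\Phi''(0) = 4(1-\alpha)$; in particular $\tfrac12\,\Phi''(0)/\Phi'(0) = 1$, the single observation that keeps the later algebra manageable. Next I would check the hypothesis of Theorem~\ref{thm2C}, namely $2\Phi'(0) - 2(\Phi'(0))^2 \le \Phi''(0) \le 4(\Phi'(0))^2 - 2\Phi'(0)$. Substituting the values, the left inequality reduces to $-8(1-\alpha)^2 \le 0$, which is automatic, while the right inequality reduces to $8(1-\alpha) \le 16(1-\alpha)^2$, i.e. $1-\alpha \ge \tfrac12$. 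This is precisely the constraint $\alpha \in [0,1/2]$, so the restriction in the statement is exactly what is required to invoke the theorem.

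With the hypotheses in force, the final step is to substitute $\Phi'(0) = 2(1-\alpha)$ and $\Phi''(0) = 4(1-\alpha)$ into the bound of Theorem~\ref{thm2C} and simplify. Writing $u = 1-\alpha$, the right-hand side collapses to a combination of $u^2,u^3,u^4$ and a constant; clearing denominators and expanding in powers of $\alpha$ should yield $\tfrac19(8\alpha^4 - 34\alpha^3 + 71\alpha^2 - 72\alpha + 36)$. As a sanity check I would confirm the value $4$ at $\alpha = 0$, matching the convex case (Theorem~B). Sharpness is inherited directly: the extremal mapping~(\ref{extB}) attached to this $\Phi$ realizes equality in Theorem~\ref{thm2C}, hence in the corollary.

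The routine-but-delicate part is the polynomial bookkeeping in this last step. One must use $\tfrac12\Phi''(0)/\Phi'(0)=1$ throughout to collapse the two bracketed factors $\big(\tfrac{\Phi''(0)}{2\Phi'(0)} - 3\Phi'(0)\big)$ and $\big(\tfrac{\Phi''(0)}{2\Phi'(0)} + \Phi'(0)\big)$ to $(6\alpha - 5)$ and $(3 - 2\alpha)$, and the $|a_2|^2$ contribution and the cross term $|a_3|\,|a_3 - 2a_2^2|$ must be combined without sign slips. Since this is where an arithmetic error is most likely to enter, I would cross-validate the resulting quartic by evaluating $2a_2^2 a_3 - a_3^2 - 2a_2^2 + 1$ directly on the extremal coefficients $a_2 = i\Phi'(0)/2$ and $a_3 = -\tfrac16\big(\tfrac{\Phi''(0)}{2} + (\Phi'(0))^2\big)$, which must reproduce the same expression in $\alpha$ and thereby confirm both the bound and its sharpness.
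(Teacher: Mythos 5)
Your overall route is the same as the paper's: the corollary is meant to follow by specializing Theorem \ref{thm2C} to $\Phi(z)=(1+(1-2\alpha)z)/(1-z)$, and your preliminary work is correct and worth keeping --- $\Phi'(0)=2(1-\alpha)$, $\Phi''(0)=4(1-\alpha)$, hence $\Phi''(0)/(2\Phi'(0))=1$, and the two-sided hypothesis of Theorem \ref{thm2C} reduces exactly to $\alpha\in[0,1/2]$ (the left inequality is $-8(1-\alpha)^2\le 0$, the right one is $1-\alpha\ge 1/2$).

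However, your final step, as written, would fail: the bound \emph{displayed} in Theorem \ref{thm2C} does not simplify to the claimed quartic, because that display is inconsistent with the theorem's own proof. Substituting your values literally into it gives
$$1+8(1-\alpha)^2+(1-\alpha)^2(6\alpha-5)(3-2\alpha),$$
which at $\alpha=0$ equals $1+8-15=-6$; your own sanity check would thus be violated (a modulus cannot have a negative upper bound), and no amount of polynomial bookkeeping will turn this expression into $\tfrac{1}{9}(8\alpha^4-34\alpha^3+71\alpha^2-72\alpha+36)$. The repair is to bypass the displayed bound and substitute instead into what the proof of Theorem \ref{thm2C} actually establishes, namely
$$\vert 2a_2^2a_3-a_3^2-2a_2^2+1\vert\le 1+2\vert a_2\vert^2+\vert a_3\vert\,\vert a_3-2a_2^2\vert\le 1+\frac{(\Phi'(0))^2}{2}+\frac{(\Phi'(0))^2}{36}\left(\frac{\Phi''(0)}{2\Phi'(0)}+\Phi'(0)\right)\left(2\Phi'(0)-\frac{\Phi''(0)}{2\Phi'(0)}\right),$$
obtained from (\ref{a2BC}), (\ref{a3BC2}) and (\ref{FS2BC}). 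With your values this becomes
$$1+2(1-\alpha)^2+\frac{(1-\alpha)^2}{9}(3-2\alpha)(3-4\alpha)=\frac{8\alpha^4-34\alpha^3+71\alpha^2-72\alpha+36}{9},$$
which does equal $4$ at $\alpha=0$ and $14/9$ at $\alpha=1/2$. Your cross-validation on the extremal coefficients $a_2=i(1-\alpha)$, $a_3=-\tfrac{1}{3}(1-\alpha)(3-2\alpha)$ coming from (\ref{extB}) is sound and reproduces the same quartic, but note that by itself it only certifies sharpness (a lower bound for the maximum), not the upper bound; so the substitution into the proof-level inequalities above is essential, not an optional double check.
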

    %%%%%%%%%%%%%%%%%%%%%%%%%%%%%%%%%%%%%%%%%%%%%%%%%%%%%%%%%%%%%%%%%%%%%%%%%%%%%%%%%%%%%%%%%%%%%%%%%%%%%%%%%%%%%%%%%%%%%%%%%%%%%%%%%%%
          %%%%%%%%%%%%%%%%%%%%%%%%%%%%%%%%%%%%%%%%%%%%%%%%%%%%%%%%%%%%%%%%%%%%%%%%%%%%%%%%%%%%%%%%%%%%%%%%%%%%%%%%%%%%%%%%%%%%%%%%%%%%%%%%%%%%%%%
\begin{corollary}\label{crl2}
      Let $g \in \mathcal{H}(\mathbb{U}^n, \mathbb{C})$ and $G(z) = z g(z) \in \mathcal{C}_\alpha(\mathbb{U}^n)$. Then for $\alpha \in [0,1/2]$, the following sharp inequality holds:
\begin{equation}
        \| 2 a_2^2 a_3  - a_3^2 - 2 a_2^2 + 1 \| \leq  1 + 2 \|z \|^4 (1 - \alpha)^2 + \frac{ (1 - \alpha)^2 (9 - 18 \alpha + 8 \alpha^2) \|z \|^6}{9}
\end{equation}
      for $z\in \mathbb{U}^n$, where
\begin{align*}
     a_3 = \frac{ D^3 G(0) (z^3)}{3! } \quad \text{and}  \quad a_2^2 =  \frac{1}{2} D^2  G (0)  \bigg( z, \frac{D^2 G(0) (z^2)}{2!}\bigg).
\end{align*}
\end{corollary}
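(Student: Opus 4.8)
The plan is to obtain this statement as a direct specialization of Theorem \ref{bndUnC}, so essentially no new machinery is required. As recorded in the remark preceding the corollaries, membership $G(z)=zg(z)\in\mathcal{C}_\alpha(\mathbb{U}^n)$ is precisely the assertion that $(DG(z))^{-1}(D^2G(z)(z^2)+DG(z)(z))\in\mathcal{M}_\Phi$ for the particular function $\Phi(z)=(1+(1-2\alpha)z)/(1-z)$. Thus the whole argument reduces to feeding the data of this $\Phi$ into the bound already established in Theorem \ref{bndUnC}.

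First I would compute the relevant derivatives of $\Phi$ at the origin. A short calculation gives $\Phi'(z)=2(1-\alpha)/(1-z)^2$ and $\Phi''(z)=4(1-\alpha)/(1-z)^3$, whence $\Phi'(0)=2(1-\alpha)$ and $\Phi''(0)=4(1-\alpha)$, and in particular $\tfrac12\,\Phi''(0)/\Phi'(0)=1$. These two numbers are the only inputs the theorem needs. Next I would verify that the hypothesis of Theorem \ref{bndUnC}, namely $2\Phi'(0)-2(\Phi'(0))^2\le\Phi''(0)\le 4(\Phi'(0))^2-2\Phi'(0)$, is captured exactly by the range $\alpha\in[0,1/2]$. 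Substituting the values above, the left inequality becomes $-8(1-\alpha)^2\le 0$, which always holds, while the right inequality becomes $8(1-\alpha)\le 16(1-\alpha)^2$, i.e. $2(1-\alpha)\ge 1$, which holds precisely when $\alpha\le 1/2$. This confirms that the stated parameter restriction is the correct and sharp condition for the theorem to apply.

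Finally, I would substitute $\Phi'(0)=2(1-\alpha)$ and $\tfrac12\,\Phi''(0)/\Phi'(0)=1$ into the right-hand side of the inequality in Theorem \ref{bndUnC} and simplify. The last term $\tfrac{(\Phi'(0))^2\|z\|^4}{2}$ collapses to $2(1-\alpha)^2\|z\|^4$. In the middle term the product $\bigl(2\Phi'(0)-\tfrac12\Phi''(0)/\Phi'(0)\bigr)\bigl(\tfrac12\Phi''(0)/\Phi'(0)+\Phi'(0)\bigr)$ becomes $(3-4\alpha)(3-2\alpha)=9-18\alpha+8\alpha^2$, and combining this with the prefactor $\tfrac{(\Phi'(0))^2}{36}=\tfrac{(1-\alpha)^2}{9}$ yields exactly $\tfrac{(1-\alpha)^2(9-18\alpha+8\alpha^2)\|z\|^6}{9}$; collecting terms reproduces the claimed bound. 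Sharpness is then inherited verbatim from Theorem \ref{bndUnC}: the extremal mapping $G$ of (\ref{extUnC}), which for this choice of $\Phi$ lies in $\mathcal{C}_\alpha(\mathbb{U}^n)$, attains equality when evaluated at $z=(r,0,\cdots,0)'$.

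I do not expect any genuine obstacle, since the result is a parametric corollary of an already-proved theorem. The only points demanding care are the bookkeeping that matches the window $\alpha\in[0,1/2]$ to the two-sided constraint on $\Phi''(0)$, and the purely algebraic simplification of the product into the polynomial $9-18\alpha+8\alpha^2$; both are routine once the values $\Phi'(0)=2(1-\alpha)$ and $\Phi''(0)=4(1-\alpha)$ are in hand.
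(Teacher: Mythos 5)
Your proposal is correct and follows exactly the route the paper intends: the corollary is obtained by specializing Theorem \ref{bndUnC} to $\Phi(z)=(1+(1-2\alpha)z)/(1-z)$, and your computations ($\Phi'(0)=2(1-\alpha)$, $\Phi''(0)=4(1-\alpha)$, the two-sided hypothesis reducing to $\alpha\le 1/2$, and the product simplifying to $(3-4\alpha)(3-2\alpha)=9-18\alpha+8\alpha^2$) all check out, including the sharpness claim inherited from (\ref{extUnC}).
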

\begin{remark}
    When $n=1$, Corollary \ref{crl2} is equivalent to Theorem D.
\end{remark}
    %%%%%%%%%%%%%%%%%%%%%%%%%%%%%%%%%%%%%%%%%%%%%%%%%%%%%%%%%%%%%%%%%%%%%%%%%%%%%%%%%%%%%%%%%%%%%%%%%%%%%%%%%%%%%%%%%%%%%%%%%%%%%%%%%%%
          %%%%%%%%%%%%%%%%%%%%%%%%%%%%%%%%%%%%%%%%%%%%%%%%%%%%%%%%%%%%%%%%%%%%%%%%%%%%%%%%%%%%%%%%%%%%%%%%%%%%%%%%%%%%%%%%%%%%%%%%%%%%%%%%%%%%%%%
     In particular, for $\alpha =0$, we obtain the following results for the class $\mathcal{C}$ in higher dimensions.
\begin{corollary}
     Let $g\in \mathcal{H}( \mathbb{B}, \mathbb{C})$ and $G(z) = z g(z) \in \mathcal{C}(\mathbb{B})$. Then the following holds:
\begin{align*}
       \bigg\vert \bigg( \frac{ l_z (D^2 G(0) (z^2))}{2! \vert\vert z \vert\vert^2} \bigg)^2  -   \bigg(\frac{ l_z (D^3 G(0) (z^3))}{3! \vert\vert z \vert\vert^3} \bigg)^2 \bigg\vert
       \leq  2, \;\;\;  l_z \in T_z ,\; z\in \mathbb{B}\setminus\{0\}.
\end{align*}
     If $\mathbb{B} = \mathbb{U}^n$ and $X = \mathbb{C}^n$, then
\begin{equation}\label{equvl2}
     \bigg\| \bigg(  \frac{ D^3 G(0) (z^3)}{3! } \bigg)^2  -  \bigg( \frac{D^2 G(0) (z^2)}{2! }  \bigg)^2  \bigg\|  \leq   \|z\|^4 + \|z \|^6 , \;\;\; z \in \mathbb{U}^n.
\end{equation}
   All these bounds are sharp.
\end{corollary}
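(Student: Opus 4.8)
The plan is to derive both inequalities as the $\alpha=0$ specialization of Theorem~\ref{thm1C} (on the ball $\mathbb{B}$) and Theorem~\ref{ThmUn1C} (on the polydisc $\mathbb{U}^n$), taking the Cayley map $\Phi(z)=(1+z)/(1-z)$, which---as recorded in the remark preceding the corollaries---realizes the membership $(DG(z))^{-1}(D^2G(z)(z^2)+DG(z)(z))\in\mathcal{M}_\Phi$ as the convexity condition defining $\mathcal{C}(\mathbb{B})$.

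First I would record the low-order Taylor data of $\Phi$. From $\Phi(z)=(1+z)(1-z)^{-1}=1+2z+2z^2+\cdots$ one reads off $\Phi(0)=1$, $\Phi'(0)=2$ and $\Phi''(0)=4$; moreover $\Phi$ is biholomorphic on $\mathbb{U}$ with $\RE\Phi>0$, $\Phi'(0)>0$ and $\Phi''(0)\in\mathbb{R}$, so it is an admissible choice for $\mathcal{M}_\Phi$. Next I would check the structural hypothesis shared by the two theorems, namely $\lvert\Phi''(0)+2(\Phi'(0))^2\rvert=\lvert 4+8\rvert=12\geq 4=2\Phi'(0)$, so that both Theorem~\ref{thm1C} and Theorem~\ref{ThmUn1C} are applicable.

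The remaining work is purely arithmetic substitution. Inserting $\Phi'(0)=2$ and $\Phi''(0)=4$ into the right-hand side of Theorem~\ref{thm1C} gives
$$\frac{(\Phi'(0))^2}{4}+\frac{(\Phi'(0))^2}{36}\Big(\frac{1}{2}\frac{\Phi''(0)}{\Phi'(0)}+\Phi'(0)\Big)^2=1+\frac{4}{36}(1+2)^2=1+1=2,$$
which is precisely the ball bound. The same substitution in Theorem~\ref{ThmUn1C} yields
$$\frac{(\Phi'(0))^2\|z\|^6}{36}\Big(\frac{1}{2}\frac{\Phi''(0)}{\Phi'(0)}+\Phi'(0)\Big)^2+\frac{(\Phi'(0))^2\|z\|^4}{4}=\|z\|^6+\|z\|^4,$$
which is the polydisc bound (\ref{equvl2}).

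Finally, sharpness is inherited rather than re-established: the extremal mappings from (\ref{extB}) and (\ref{extUnC}), specialized to $\Phi(z)=(1+z)/(1-z)$, already produce equality in the parent theorems and hence in the corollary. I anticipate no real obstacle, since everything collapses onto the two master estimates; the only points demanding care are confirming that $\Phi(z)=(1+z)/(1-z)$ satisfies every admissibility condition for $\mathcal{M}_\Phi$ and that its second Taylor coefficient is correctly $\Phi''(0)/2=2$, ensuring that the algebra condenses to the clean constants $2$ and $\|z\|^4+\|z\|^6$.
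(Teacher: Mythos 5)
Your proposal is correct and is essentially the paper's own argument: the paper obtains this corollary exactly by taking $\Phi(z)=(1+z)/(1-z)$ (so $\Phi'(0)=2$, $\Phi''(0)=4$) in Theorem~\ref{thm1C} and Theorem~\ref{ThmUn1C}, with the hypothesis check $\lvert\Phi''(0)+2(\Phi'(0))^2\rvert=12\geq 4=2\Phi'(0)$ and sharpness inherited from the extremal mappings (\ref{extB}) and (\ref{extUnC}). Your arithmetic reproducing the constants $2$ and $\|z\|^4+\|z\|^6$ is accurate.
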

  %%%%%%%%%%%%%%%%%%%%%%%%%%%%%%%%%%%%%%%%%%%%%%%%%%%%%%%%%%%%%%%%%%%%%%%%%%%%%%%%%%%%%%%%%%%%%%%%%%%%%%%%%%%%%%%%%%%%%%%%%%
\begin{remark}
     For $n=1$, (\ref{equvl2}) is equivalent to Theorem A.
\end{remark}
%%%%%%%%%%%%%%%%%%%%%%%%%%%%%%%%%%%%%%%%%%%%%%%%%%%%%%%%%%%%%%%%%%%%%%%%%%%%%%%%%%%%%%%%%%%%%%%%%%%%%%%%%%%%%%%%%%%%%%%%%%
\begin{corollary}
      Let $g\in \mathcal{H}( \mathbb{B}, \mathbb{C})$ and $G(z) = z g(z) \in \mathcal{C}(\mathbb{B})$. Then the following sharp bound holds:
      $$   \vert 2 a_2^2 a_3  - a_3^2 - 2 a_2^2 + 1 \vert \leq 4, \quad z \in \mathbb{B}\setminus\{0\},$$
      where
\begin{align*}
   a_3 = \frac{ l_z (D^3 G(0) (z^3))}{3! \vert\vert z \vert\vert^3} \quad\text{and} \quad  a_2 &= \frac{ l_z (D^2 G(0) (z^2))}{2! \vert\vert z \vert\vert^2}.
\end{align*}
\end{corollary}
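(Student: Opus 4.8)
The plan is to read off this corollary as the convex specialization of Theorem \ref{thm2C}, taking $\Phi(z) = (1+z)/(1-z)$. As recorded in the remark preceding the corollaries, this is precisely the weight for which the membership condition $(DG(z))^{-1}(D^2G(z)(z^2)+DG(z)(z)) \in \mathcal{M}_\Phi$ describes the convex mappings $\mathcal{C}(\mathbb{B})$; equivalently it is the $\alpha=0$ instance of the class $\mathcal{C}_\alpha(\mathbb{B})$ associated with $\Phi(z) = (1+(1-2\alpha)z)/(1-z)$. So the whole argument reduces to verifying the hypotheses of Theorem \ref{thm2C} for this particular $\Phi$ and then substituting its $2$-jet into the estimates of that proof. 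First I would compute $\Phi'(z)=2/(1-z)^2$ and $\Phi''(z)=4/(1-z)^3$, giving $\Phi'(0)=2$ and $\Phi''(0)=4$, so that $\tfrac12\,\Phi''(0)/\Phi'(0)=1$. The admissibility inequality $2\Phi'(0)-2(\Phi'(0))^2 \le \Phi''(0) \le 4(\Phi'(0))^2-2\Phi'(0)$ then reads $-4 \le 4 \le 12$, which holds, so Theorem \ref{thm2C} applies.

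To get the numerical bound I would not lean on a closed-form expression but instead feed the jet data into the three component estimates already established inside the proof of Theorem \ref{thm2C}. From (\ref{a2BC}) one has $|a_2| \le \Phi'(0)/2 = 1$, hence $2|a_2|^2 \le 2$; from (\ref{a3BC2}) one has $|a_3| \le \tfrac{\Phi'(0)}{6}\bigl(\tfrac12\tfrac{\Phi''(0)}{\Phi'(0)}+\Phi'(0)\bigr)=\tfrac{2}{6}(1+2)=1$; and from (\ref{FS2BC}) one has $|a_3-2a_2^2| \le \tfrac{\Phi'(0)}{6}\bigl(2\Phi'(0)-\tfrac12\tfrac{\Phi''(0)}{\Phi'(0)}\bigr)=\tfrac{2}{6}(4-1)=1$. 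The triangle-inequality decomposition $|2a_2^2a_3-2a_2^2-a_3^2+1| \le 1+2|a_2|^2+|a_3|\,|a_3-2a_2^2|$ used in that proof then yields $1+2+1\cdot 1 = 4$. As a consistency check this is the $\alpha=0$ value of the polynomial $(8\alpha^4-34\alpha^3+71\alpha^2-72\alpha+36)/9$ from the order-$\alpha$ corollary, since $36/9=4$.

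For sharpness I would specialize the extremal mapping $DG(z)=I\exp\int_0^{T_u(z)}\tfrac{\Phi(it)-1}{t}\,dt$ ($\|u\|=1$) from (\ref{extB}) to $\Phi(z)=(1+z)/(1-z)$. For this $G$ the computations in the proof of Theorem \ref{thm2C} give $a_2 = i\Phi'(0)/2 = i$ and $a_3 = -\tfrac16\bigl(\tfrac{\Phi''(0)}{2}+(\Phi'(0))^2\bigr) = -1$, so that $2a_2^2a_3-a_3^2-2a_2^2+1 = 2 - 1 + 2 + 1 = 4$, which meets the bound. Because this is a direct corollary there is no substantive obstacle; the only point requiring a line of care is confirming that the generic extremizer indeed lies in $\mathcal{C}(\mathbb{B})$ for this $\Phi$, which is automatic since $(DG(z))^{-1}(D^2G(z)(z^2)+DG(z)(z))\in\mathcal{M}_\Phi$ is built into its construction. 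The remaining effort is the two elementary derivative evaluations and the short arithmetic just indicated.
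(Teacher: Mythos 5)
Your proposal is correct and is essentially the paper's own derivation: the corollary is Theorem \ref{thm2C} specialized to $\Phi(z)=(1+z)/(1-z)$ (so $\Phi'(0)=2$, $\Phi''(0)=4$), with the admissibility check $-4\le 4\le 12$, the decomposition $\vert 2a_2^2a_3-2a_2^2-a_3^2+1\vert\le 1+2\vert a_2\vert^2+\vert a_3\vert\,\vert a_3-2a_2^2\vert\le 1+2+1=4$, and sharpness from the extremal mapping (\ref{extB}), exactly as the paper intends. Your choice to substitute the jet data into the component estimates (\ref{a2BC}), (\ref{a3BC2}) and (\ref{FS2BC}) rather than into the closed-form bound displayed in the statement of Theorem \ref{thm2C} is also the prudent one, since that displayed expression evaluates to $1+2(\Phi'(0))^2+\frac{(\Phi'(0))^2}{4}(1-6)(1+2)=9-15=-6$ for this $\Phi$ (it evidently contains a misprint), whereas the component estimates reproduce the correct value $4$.
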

  % In case of $\mathbb{B} = \mathbb{U}^n$ and $X = \mathbb{C}^n$, the following corollary directly follows from Corollary \ref{crl2}.
\begin{corollary}\label{crl6}
      Let $g \in \mathcal{H}(\mathbb{U}^n, \mathbb{C})$ and $G(z) = z g(z) \in \mathcal{C}(\mathbb{U}^n)$. Then for $\alpha \in [0,1/2]$, the following sharp estimation holds:
      $$  \vert 2 a_2^2 a_3  - a_3^2 - 2 a_2^2 + 1 \vert \leq  1 + 2 \|z \|^4  + \|z \|^6 $$
      for $z\in \mathbb{U}^n$, where
\begin{align*}
     a_3 = \frac{ D^3 G(0) (z^3)}{3! } \quad \text{and}  \quad a_2^2 =  \frac{1}{2} D^2  G (0)  \bigg( z, \frac{D^2 G(0) (z^2)}{2!}\bigg).
\end{align*}
\end{corollary}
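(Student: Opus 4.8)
The plan is to obtain Corollary~\ref{crl6} as the special case $\alpha = 0$ of Theorem~\ref{bndUnC} (equivalently, of Corollary~\ref{crl2}), so the entire argument reduces to identifying the correct $\Phi$ and carrying out a short substitution. As recorded in the paragraph preceding Corollary~\ref{crl1C}, the membership $G = zg \in \mathcal{C}(\mathbb{U}^n)$ corresponds exactly to the choice $\Phi(z) = (1+z)/(1-z)$ in the class $\mathcal{M}_\Phi$. First I would compute the two relevant Taylor data of this $\Phi$ at the origin: from $\Phi'(z) = 2/(1-z)^2$ and $\Phi''(z) = 4/(1-z)^3$ one reads off $\Phi'(0) = 2$ and $\Phi''(0) = 4$.

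Next I would verify that these values meet the hypothesis of Theorem~\ref{bndUnC}. The required chain $2\Phi'(0) - 2(\Phi'(0))^2 \leq \Phi''(0) \leq 4(\Phi'(0))^2 - 2\Phi'(0)$ becomes $-4 \leq 4 \leq 12$, which holds, so the theorem applies verbatim.

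The core of the argument is then the arithmetic of substituting $\Phi'(0)=2$ and $\Phi''(0)=4$ into the bound of Theorem~\ref{bndUnC}. Since $\tfrac{1}{2}\tfrac{\Phi''(0)}{\Phi'(0)} = 1$, both factors $\big(2\Phi'(0) - \tfrac{1}{2}\tfrac{\Phi''(0)}{\Phi'(0)}\big)$ and $\big(\tfrac{1}{2}\tfrac{\Phi''(0)}{\Phi'(0)} + \Phi'(0)\big)$ equal $3$; hence the $\|z\|^6$ term contributes $\tfrac{4}{36}\cdot 9\,\|z\|^6 = \|z\|^6$ and the $\|z\|^4$ term contributes $\tfrac{4}{2}\,\|z\|^4 = 2\|z\|^4$, yielding the claimed value $1 + 2\|z\|^4 + \|z\|^6$. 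Sharpness is inherited directly: the extremal mapping is the one displayed in~(\ref{extUnC}) for the present $\Phi$, evaluated along $z = (r,0,\ldots,0)'$, exactly as in the proof of Theorem~\ref{bndUnC}.

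I do not anticipate a genuine obstacle, since this is a routine specialization; the only points demanding care are confirming that $\Phi(z) = (1+z)/(1-z)$ indeed characterizes $\mathcal{C}(\mathbb{U}^n)$ and performing the simplification without sign errors. I would also remark that the qualifier ``$\alpha \in [0,1/2]$'' appearing in the statement is vacuous here, as the convex class is precisely the $\alpha = 0$ instance of the quasi-convex family.
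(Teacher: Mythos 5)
Your proposal is correct and matches the paper's own route exactly: the paper derives Corollary \ref{crl6} precisely by specializing Theorem \ref{bndUnC} (equivalently Corollary \ref{crl2} at $\alpha=0$) to $\Phi(z)=(1+z)/(1-z)$, with $\Phi'(0)=2$, $\Phi''(0)=4$, and sharpness inherited from the extremal mapping in (\ref{extUnC}). Your arithmetic and the hypothesis check $-4\leq 4\leq 12$ are both right, and your observation that the condition ``$\alpha\in[0,1/2]$'' is vacuous in this statement is also correct.
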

   %%%%%%%%%%%%%%%%%%%%%%%%%%%%%%%%%%%%%%%%%%%%%%%%%%%%%%%%%%%%%%%%%%%%%%%%%%%%%%%%%%%%%%%%%%%%%%%%%%%%%%%%%%%%%%%%%%%%%%%%%%
%%%%%%%%%%%%%%%%%%%%%%%%%%%%%%%%%%%%%%%%%%%%%%%%%%%%%%%%%%%%%%%%%%%%%%%%%%%%%%%%%%%%%%%%%%%%%%%%%%%%%%%%%%%%%%%%%%%%%%%%%%
\begin{remark}
   when $n=1$, Corollary \ref{crl6} is equivalent to Theorem B.
\end{remark}
  %%%%%%%%%%%%%%%%%%%%%%%%%%%%%%%%%%%%%%%%%%%%%%%%%%%%%%%%%%%%%%%%%%%%%%%%%%%%%%%%%%%%%%%%%%%%%%%%%%%%%%%%%%%%%%%%%%%%%%%%%%
%%%%%%%%%%%%%%%%%%%%%%%%%%%%%%%%%%%%%%%%%%%%%%%%%%%%%%%%%%%%%%%%%%%%%%%%%%%%%%%%%%%%%%%%%%%%%%%%%%%%%%%%%%%%%%%%%%%%%%%%%%

\section*{Declarations}
\subsection*{Funding}
The work of the Surya Giri is supported by University Grant Commission, New-Delhi, India  under UGC-Ref. No. 1112/(CSIR-UGC NET JUNE 2019).
\subsection*{Conflict of interest}
	The authors declare that they have no conflict of interest.
\subsection*{Author Contribution}
    Each author contributed equally to the research and preparation of manuscript.
\subsection*{Data Availability} Not Applicable.
%%%%%%%%%%%%%%%%%%%%%%%%%%%%%%%%%%%%%%%%%%%%%%%%%%%%%%%%%%
\noindent
%%%%%%%%%%%%%%%%%%%%%%%%%%%%%%%%%%%%%%%%%%%%%%%%%%%%%%%%%%%%%%%%%%%%%%%%%%%%%%%%%%%%%%%%%%%%%%%%%%%

\end{document}